\newtheorem{theorem}{Theorem}
\newtheorem{corollary}[theorem]{Corollary}
\newtheorem{lemma}[theorem]{Lemma}
\numberwithin{theorem}{section}
\numberwithin{figure}{section}
\numberwithin{equation}{section}
\DeclareMathOperator{\SLE}{SLE}
\DeclareMathOperator{\hSLE}{hSLE}
\DeclareMathOperator{\diam}{diam}
\DeclareMathOperator{\Leb}{Leb}
\begin{document}

\title{Decomposition of Hypergeometric SLE and Reversibility}
\bigskip{}
\author{Mingchang Liu\thanks{liumc\_prob@163.com}}
\affil{KTH Royal Institute of Technology, Sweden}
\date{}

\maketitle

\begin{center}
\begin{minipage}{0.95\textwidth}
\abstract{ 
In this paper, we consider hypergeometric SLE process for $\kappa\in (4,8)$ and $\nu>\frac{\kappa}{2}-6$. Though the definition of hypergeometric SLE process is complicated, we show that given its hitting point on a specific boundary, its conditional law can be described by $\SLE_\kappa(\underline\rho)$ process. Based on this observation, by constructing a pair of curves, we derive the reversibility of hypergeometric SLE for $\kappa\in(4,8)$ and $\nu>-2$.
}

\bigskip{}

\noindent\textbf{Keywords:} Schramm Loewner evolution, Gaussian free field flow lines, reversibility\\ 

\noindent\textbf{MSC:} 60J67
\end{minipage}
\end{center}

% Use \authorrunning{Short Title} for an abbreviated version of
% your contribution title if the original one is too long%

%
% Use the package "url.sty" to avoid
% problems with special characters
% used in your e-mail or web address
%}

\tableofcontents

\newcommand{\eps}{\epsilon}
\newcommand{\ov}{\overline}
\newcommand{\U}{\mathbb{U}}
\newcommand{\T}{\mathbb{T}}
\newcommand{\HH}{\mathbb{H}}
\newcommand{\LA}{\mathcal{A}}
\newcommand{\LB}{\mathcal{B}}
\newcommand{\LC}{\mathcal{C}}
\newcommand{\LD}{\mathcal{D}}
\newcommand{\LF}{\mathcal{F}}
\newcommand{\LK}{\mathcal{K}}
\newcommand{\LE}{\mathcal{E}}
\newcommand{\LG}{\mathcal{G}}
\newcommand{\LL}{\mathcal{L}}
\newcommand{\LM}{\mathcal{M}}
\newcommand{\LQ}{\mathcal{Q}}
\newcommand{\LP}{\mathcal{P}}
\newcommand{\LR}{\mathcal{R}}
\newcommand{\LT}{\mathcal{T}}
\newcommand{\LS}{\mathcal{S}}
\newcommand{\LU}{\mathcal{U}}
\newcommand{\LV}{\mathcal{V}}
\newcommand{\LX}{\mathcal{X}}
\newcommand{\PartF}{\mathcal{Z}}
\newcommand{\LH}{\mathcal{H}}
\newcommand{\R}{\mathbb{R}}
\newcommand{\C}{\mathbb{C}}
\newcommand{\N}{\mathbb{N}}
\newcommand{\Z}{\mathbb{Z}}
\newcommand{\E}{\mathbb{E}}
\newcommand{\PP}{\mathbb{P}}
\newcommand{\QQ}{\mathbb{Q}}
\newcommand{\A}{\mathbb{A}}
\newcommand{\one}{\mathbb{1}}
\newcommand{\bn}{\mathbf{n}}
\newcommand{\MR}{MR}
\newcommand{\cond}{\,|\,}
\newcommand{\la}{\langle}
\newcommand{\ra}{\rangle}
\newcommand{\tree}{\Upsilon}
\newcommand{\prob}{\mathbb{P}}
\renewcommand{\Im}{\mathrm{Im}}
\renewcommand{\Re}{\mathrm{Re}}
\newcommand{\ii}{\mathfrak{i}}

\section{Introduction}
\label{sec::intro}
SLE process is introduced by O. Schramm in~\cite{SchrammScalinglimitsLERWUST}, which is defined in simply connected domains with two boundary points. This is a family of random curves with one parameter $\kappa>0$. $\SLE_\kappa$ curves are characterized by conformal invariance and domain Markov property. Thus, they are conjectured to be candidates of the scaling limits of interfaces of $2$D lattice models at criticality. The convergences of several models have been proved: loop-erased random walk and uniform spanning tree~\cite{LawlerSchrammWernerLERWUST}, percolation~\cite{SmirnovPercolationConformalInvariance}, level lines of Gaussian free field ~\cite{SchrammSheffieldDiscreteGFF}, Ising and FK-Ising model~\cite{ChelkakSmirnovIsing, CDCHKSConvergenceIsingSLE}.
In this paper, we will consider two natural variants of $\SLE$ process: hypergeometric $\SLE$ process and $\SLE_\kappa(\underline\rho)$ process. They are all conformally invariant random curves.

Hypergeometric $\SLE$ process is introduced by Dapeng Zhan in~\cite{ZhanReversibilityMore} (and he called it ``intermediate $\SLE$ process"), which is used to describe the law of the time-reversal of chordal $\SLE_\kappa(\rho)$ curves. Then, this definition is generalized in~\cite{QianConformalRestrictionTrichordal} and~\cite{WuHyperSLE} and the notations hypergeometric $\SLE$ and $\hSLE$ appeared.
\footnote{The notations hypergeometric $\SLE$ and $\hSLE$ are first introduced in~\cite{QianConformalRestrictionTrichordal}, where the author focuses on the case $\kappa=\frac{8}{3}$. When $\kappa\in(0,4)$ and $\nu>-4$, the hypergeometric SLE in~\cite{WuHyperSLE} is a subfamily of the hypergeometric SLE in~\cite{QianConformalRestrictionTrichordal}. See a more detailed comparison in~\cite[Appendix D]{HanLiuWuUST}.} In this paper, we will consider the case that $\kappa\in (4,8)$ and thus we will use the notations and definitions in~\cite{WuHyperSLE}. When $\kappa\in(0,8)$, hypergeometric $\SLE$ is a family of  random curves with two parameters $\kappa$ and $\nu$. It is defined in a quad $(\Omega;a,x,y,b)$ (where $\Omega$ is a simply connected domain, and $(a,x,y,b)$ are four marked boundary points located counterclockwisely), with starting point $a$, target point $b$ and two force points $x,y$. It can be used to describe the scaling limits of discrete models with alternating boundary conditions, for instance, see~\cite{KemppainenSmirnovFKIsingHyperSLE} for FK-Ising model,~\cite{WuHyperSLE} for Ising model and~\cite{HanLiuWuUST} for uniform spanning tree.  %The definition of $\hSLE_\kappa(\nu)$ depends on whether $\nu>(\frac{\kappa}{2}-6)\vee (-4)$ or not. In this paper, we will only consider the case that $\kappa\in (4,8)$ and $\nu>\frac{\kappa}{2}-6$. 

$\SLE_\kappa(\underline\rho)$ process is a variant of $\SLE$ process which %one 
keeps tracks of several boundary points. $\SLE_\kappa(\underline\rho)$ process can be interpreted as flow lines of Gaussian free field, see~\cite{MillerSheffieldIG1},~\cite{MillerSheffieldIG2},~\cite{MillerSheffieldIG3} and~\cite{MillerSheffieldIG4}. In these works, the authors use the coupling between $\SLE_\kappa(\underline\rho)$ curves and Gaussian free field to obtain many properties of $\SLE_\kappa(\underline\rho)$ curves. (For example, the duality, reversibility, interactions among several flow lines and counterflow lines and so forth.)

In this paper, we first derive a connection between $\hSLE_\kappa(\nu)$ and $\SLE_\kappa(\underline\rho)$ when $\kappa\in (4,8)$ and $\nu>\frac{\kappa}{2}-6$. By~\cite[Lemma 3.4]{WuHyperSLE}, when $\kappa\in (4,8)$ and $\nu>\frac{\kappa}{2}-6$, the $\hSLE_\kappa(\nu)$ curve hits $(y,+\infty)$ almost surely.
%We will first show the following result in Section~\ref{sec::connection}.
\begin{theorem}\label{thm::decom}
Fix $\kappa\in (4,8)$ and $\nu>\frac{\kappa}{2}-6$. Let $\eta$ be the $\hSLE_\kappa(\nu)$ curve in $(\HH;0,x,y,\infty)$ from $0$ to $\infty$ with force points $x$, $y$. We denote by $\tau$ the hitting time of $\eta$ at $(y,+\infty)$. Then, the density of $\eta(\tau)$ is given by
\begin{equation}\label{eqn::dis}
\rho(u):=\frac{u^{-\frac{4}{\kappa}}(u-y)^{\frac{2\nu+12-2\kappa}{\kappa}}\left(u-x\right)^{-\frac{2\nu+4}{\kappa}}}{\int_y^{+\infty} s^{-\frac{4}{\kappa}}(s-y)^{\frac{2\nu+12-2\kappa}{\kappa}}\left(s-x\right)^{-\frac{2\nu+4}{\kappa}}ds},\quad\text{for }u\in(y,+\infty).
\end{equation}
Moreover, the conditional distribution of $\eta[0,\tau]$ given $\eta(\tau)$ is $\SLE_\kappa(\nu+2,\kappa-6-\nu,-4)$ in $\HH$ from $0$ to $\infty$ with the marked points $x$, $y$ and $\eta(\tau)$.
\end{theorem}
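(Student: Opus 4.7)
I would work in the partition-function / Girsanov framework. Write $\PartF_{\hSLE}(w;x,y)$ for the partition function of $\hSLE_\kappa(\nu)$ (the explicit hypergeometric expression from~\cite{WuHyperSLE}), and, for each $u\in(y,+\infty)$, write
\[
\PartF(w;x,y,u)\;=\;\prod_{i}(v_i-w)^{\rho_i/\kappa}\prod_{i<j}(v_j-v_i)^{\rho_i\rho_j/(2\kappa)},
\]
with $(v_1,v_2,v_3)=(x,y,u)$ and $(\rho_1,\rho_2,\rho_3)=(\nu+2,\kappa-6-\nu,-4)$, for the Coulomb-gas partition function of $\SLE_\kappa(\underline\rho)$ with force points $(x,y,u)$. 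A direct check of the BPZ null-vector equation at $w$ shows that $M^{(u)}_t:=\PartF(W_t;V^x_t,V^y_t,V^u_t)/\PartF_{\hSLE}(W_t;V^x_t,V^y_t)$ is a local martingale under the Loewner flow driven by $\hSLE_\kappa(\nu)$, and weighting by $M^{(u)}$ implements a Girsanov change of measure from $\hSLE_\kappa(\nu)$ to $\SLE_\kappa(\nu+2,\kappa-6-\nu,-4)$ with force points at $(x,y,u)$, at least up to the hitting time $\tau^{(u)}$ of $u$.

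The first key step is the integral identity
\[
\PartF_{\hSLE}(w;x,y)\;=\;C\int_y^{+\infty}\PartF(w;x,y,u)\,du,
\]
with $C$ independent of $(w,x,y)$; this is Euler's integral representation of the $\hF$ factor in $\PartF_{\hSLE}$ after a M\"obius change of variable sending $(y,+\infty)$ to $(0,1)$. Setting $w=0$ and reading off only the $u$-dependent factors of $\PartF(0;x,y,u)$ one recovers exactly the integrand in~\eqref{eqn::dis}, so the normalized density is $\rho(u)=\PartF(0;x,y,u)/\int_y^{+\infty}\PartF(0;x,y,u')\,du'$. The integrability condition at $u=y^+$ reduces to $\nu>\kappa/2-6$, precisely matching the hypothesis, while integrability at $u=+\infty$ reduces to $\kappa>4$, consistent with the baseline assumption---a reassuring consistency check.

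The second step combines the local martingale property with the integral identity via Fubini. For any bounded continuous functional $F$ of the curve up to a time $t<\tau$,
\[
\E\bigl[F(\eta[0,t])\bigr]\;=\;\int_y^{+\infty}\rho(u)\,\E^{(u)}\bigl[F(\eta^{(u)}[0,t])\bigr]\,du,
\]
where $\E^{(u)}$ denotes expectation under $\SLE_\kappa(\nu+2,\kappa-6-\nu,-4)$ with force points $(x,y,u)$. Letting $t\uparrow\tau$ (respectively $\tau^{(u)}$) and using that under the $\SLE_\kappa(\underline\rho)$ law the curve is continuously swallowed into its third force point at time $\tau^{(u)}$, both the marginal density of $\eta(\tau)$ and the conditional law of $\eta[0,\tau]$ given $\eta(\tau)=u$ drop out exactly as stated.

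The main obstacle is the first step: matching the hypergeometric factor in $\PartF_{\hSLE}$ to an Euler integral of $\PartF(\cdot;x,y,u)$ requires careful bookkeeping of the various exponents (and a single normalizing constant $C$); it is algebraically routine but error-prone. A secondary technical point is to verify that $M^{(u)}$ stopped at $\tau\wedge\tau^{(u)}$ is uniformly integrable, so that Girsanov upgrades from a stopped local-martingale statement to the full disintegration; this should follow by standard localization together with Bessel-type estimates for $W-V^u$ near the swallowing time.
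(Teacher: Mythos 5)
Your architecture is the same as the paper's: your $M^{(u)}$ is, up to the bookkeeping you flag, exactly the local martingale $M_t(z)$ of Lemma~\ref{lem::deco} (the paper verifies the drift cancellation by It\^o's formula and the hypergeometric ODE, which is your BPZ check), your integral identity expressing the $\hSLE$ partition function as $C\int_y^{+\infty}$ of the three-force-point Coulomb-gas factors is exactly the Euler representation~\eqref{eqn::integral} after the M\"obius substitution, and the exponent/integrability checks you record do come out right. The uniform-integrability issue you flag as a secondary point is handled in the paper by stopping at the hitting time $\tau_\eps$ of an $\eps$-neighbourhood of $(y,+\infty)$ together with $\partial B(0,1/\eps)$, where $M$ is bounded.

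There is, however, one genuine gap: you make both conclusions rest on the assertion that the $\SLE_\kappa(\nu+2,\kappa-6-\nu,-4)$ curve $\eta^{(u)}$ is a.s.\ swallowed into its third force point $u$ at a finite time. This is not automatic. What one gets from~\cite{MillerSheffieldIG1} is only that $\eta^{(u)}$ a.s.\ avoids $(y,u)\cup(u,+\infty)$, leaving the dichotomy that it either terminates at $u$ or escapes to $\infty$ without touching $u$; ruling out the second alternative is precisely the point needing an argument, and the paper does not prove it for every $u$. Instead it does two things. For the density~\eqref{eqn::dis} it avoids $\eta^{(u)}$ entirely: it forms $N_t(w)=\frac{1}{B(\cdot,\cdot)}\int_w^{+\infty}M_t(z)\,dz$, uses the Euler representation to see $N_t\le 1$ (hence a uniformly integrable martingale), identifies $N_\tau=\one_{\{\eta(\tau)\in(w,+\infty)\}}$ directly from the Loewner-flow asymptotics of the cross-ratios at the hitting time of the $\hSLE$ curve itself, and applies optional stopping. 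For the conditional law it shows that the exceptional set $A=\{u:\PP[\tau_u=+\infty]>0\}$ is Lebesgue-null, by bounding $\E[\one_{\{\diam(\eta[0,\tau])>R\}}]$ from below by $\frac{1}{B(\cdot,\cdot)}\int_y^{+\infty}\PP[\tau_z=+\infty]M_0(z)\,dz$ and letting $R\to\infty$, using that $\tau<\infty$ a.s.; a null set of $u$ is then harmless in the disintegration. You should either supply a direct proof of the swallowing claim or adopt one of these two workarounds; as written, your derivation of the marginal density in particular leans on the unproved claim, whereas the paper's derivation of it does not need the claim at all.
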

Next, we will consider the time-reversal of $\hSLE_\kappa(\nu)$. The reversibility is natural from the perspective that $\SLE$ process and its variants describe the scaling limit of discrete models. It is not clear how to derive the reversibility directly just from the driving function. In~\cite{ZhanReversibility},~\cite{MillerSheffieldIG2} and~\cite{MillerSheffieldIG3}, the authors derive the reversibility of $\SLE_\kappa$ and of $\SLE_\kappa(\underline\rho)$ in several cases. In~\cite{yu2023timereversal}, the author derives the law of the time-reversal of $\SLE_\kappa(\underline\rho)$ in general case.
In this paper, we will prove the reversibility of $\hSLE_\kappa(\nu)$ for $\kappa\in(4,8)$ and $\nu>-2$. %For any curve $\gamma$ parameterized by the time interval $[0,1]$, we define its time-reversal $\overline\gamma$ as
%\[\overline\gamma(t)=\gamma(1-t),\quad\text{for }0\le t\le 1.\]
\begin{theorem}\label{thm::reversibility} 
Define the anti-conformal map $\psi$ from $\HH$ onto itself by $\psi(z):=\frac{1}{\overline z}$ for $z\in\HH$. Let $\eta$ be the $\hSLE_\kappa(\nu)$ curve in $(\HH;0,x,y,\infty)$ from $0$ to $\infty$ with force points $x$, $y$. Then, $\psi(\eta)$ has the same law as the $\hSLE_\kappa(\nu)$ curve in $(\HH;0,\psi(y),\psi(x),\infty)$ from $0$ to $\infty$ with force points $\psi(y)$ and $\psi(x)$.
\end{theorem}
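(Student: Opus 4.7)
The plan is to reduce the reversibility of $\hSLE_\kappa(\nu)$ to the reversibility of $\SLE_\kappa(\underline\rho)$ by using the decomposition of Theorem~\ref{thm::decom}. The ``pair of curves'' alluded to in the abstract consists of the two arcs of $\eta$ obtained by cutting at the hitting time $\tau$, each of which is (conditionally) an $\SLE_\kappa(\underline\rho)$ process whose reversibility is known from \cite{MillerSheffieldIG2,MillerSheffieldIG3,yu2023timereversal}.

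First, I would decompose $\eta$ at $\tau$: conditionally on $U:=\eta(\tau)\in(y,+\infty)$, Theorem~\ref{thm::decom} gives that $\eta[0,\tau]$ is $\SLE_\kappa(\nu+2,\kappa-6-\nu,-4)$ from $0$ with force points $x,y,U$, terminating at $U$ because $-4$ is the ``target'' weight. Then I would identify the conditional law of $\eta[\tau,\infty)$ given $(\eta[0,\tau],U)$. Using the SDE definition of $\hSLE_\kappa(\nu)$ together with the domain Markov property at $\tau$, and mapping the connected component of $\HH\setminus\eta[0,\tau]$ containing $\infty$ conformally back to $\HH$, this continuation should again be an $\SLE_\kappa(\underline\rho')$ from $U$ to $\infty$ whose force points are the two prime ends of $\eta[0,\tau]$ at $U$ together with $x$ and $y$. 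Granting this, $\eta$ is represented as a concatenation of two $\SLE_\kappa(\underline\rho)$ arcs with an explicit joint law.

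Next, I would perform the analogous decomposition for the candidate reversed curve $\eta^*$, the $\hSLE_\kappa(\nu)$ in $(\HH;0,\psi(y),\psi(x),\infty)$. By Theorem~\ref{thm::decom}, $\eta^*$ hits $(\psi(x),+\infty)$ at a time $\tau^*$ with density given by the analogue of~\eqref{eqn::dis}, and the arc $\eta^*[0,\tau^*]$ is again an $\SLE_\kappa(\nu+2,\kappa-6-\nu,-4)$. Since $\psi$ maps $(y,+\infty)$ to $(0,\psi(y))$ and $(0,x)$ to $(\psi(x),+\infty)$, the hitting point $\eta^*(\tau^*)$ does not correspond under $\psi$ to $U$ but rather to a point where the original $\eta$ visits $(0,x)$. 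The correct matching is therefore to reverse time: I would show that the \emph{last} time $\sigma$ at which $\eta$ touches $(0,x)$ yields, after applying $\psi$ and reversing time, the first visit of $\eta^*$ to $(\psi(x),+\infty)$. Combining this with the decomposition at $\tau$, and using reversibility of $\SLE_\kappa(\underline\rho)$ on each piece (in the range of weights covered by \cite{MillerSheffieldIG3,yu2023timereversal}), sends each $\SLE_\kappa(\underline\rho)$ arc to an $\SLE_\kappa(\underline\rho)$ with the expected swapped force points, and these reassemble into the decomposition of the image $\hSLE_\kappa(\nu)$.

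The main obstacle is checking that the densities line up: one must verify that the joint density of the cut points under the two decompositions are intertwined by $\psi$, which amounts to integrating~\eqref{eqn::dis} (and its analogue for $\eta^*$) against the conditional laws of the continuations, and identifying the resulting expression as symmetric under the composition of $\psi$ with the swap $x\leftrightarrow y$. This is where the restriction $\nu>-2$, stronger than the $\nu>\tfrac{\kappa}{2}-6$ of Theorem~\ref{thm::decom}, is expected to enter: it keeps the relevant weights in the ranges for which the reversibility results of \cite{MillerSheffieldIG3,yu2023timereversal} apply directly and for which the hypergeometric integrals defining the densities behave well at the boundary of the integration region. Once the density matching is established, the identity of the two concatenated descriptions yields Theorem~\ref{thm::reversibility}.
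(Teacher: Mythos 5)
Your proposal diverges from the paper's argument and, more importantly, has two gaps that I do not see how to close along the route you describe. First, the step ``use reversibility of $\SLE_\kappa(\underline\rho)$ on each piece'' is not available: conditionally on $\eta(\tau)=u$, the arc $\eta[0,\tau]$ is an $\SLE_\kappa(\nu+2,\kappa-6-\nu,-4)$ with \emph{three} force points, and for $\kappa\in(4,8)$ the time-reversal of a multiple-force-point $\SLE_\kappa(\underline\rho)$ is in general \emph{not} an $\SLE_\kappa(\underline\rho)$ process (this is precisely the content of~\cite{yu2023timereversal}, whose answer is a more complicated object, and of the restrictions $\rho\ge\frac{\kappa}{2}-4$ in~\cite{MillerSheffieldIG3}); the hypothesis $\nu>-2$ does not place these weights in the reversible range. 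Second, and more structurally, the two decompositions you want to match are taken at unrelated cut points: $\eta$ is cut at its \emph{first} hit of $(y,+\infty)$, while $\psi^{-1}$ of the cut point of $\eta^*$ lies on $(0,x)$, so you would need the law of $\eta$ decomposed at its \emph{last} visit to $(0,x)$. Theorem~\ref{thm::decom} says nothing about that decomposition, so ``checking that the densities line up'' is not a computation one can set up from the stated results --- it presupposes a second decomposition theorem that is itself at least as hard as the reversibility statement.

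For contrast, the paper's ``pair of curves'' is not the two arcs of $\eta$: it is $(\eta_1,\eta_2)$ where $\eta_1$ has the law of $\eta$ (built as a GFF flow line, using Theorem~\ref{thm::decom} to identify its law) and $\eta_2$ is an \emph{auxiliary} flow line of the same field at a shifted angle, running from $x$ to $y$. The two key facts are: (i) given $\eta_2$, the conditional law of $\eta_1$ is a \emph{standard} $\SLE_\kappa$ in the unbounded component of $\HH\setminus\eta_2$ (Lemma~\ref{lem::condieta1}), whose reversibility for $\kappa\in(4,8)$ is known unconditionally; and (ii) the marginal law of $\eta_2$ is a single-force-point $\SLE_\kappa(\nu)$ from $x$ to $y$ weighted by a conformal-restriction-type factor $\one_{\{\eta\cap(-\infty,0]=\emptyset\}}\phi'(0)^h$ that is manifestly intertwined by $\psi$ (Lemma~\ref{lem::eta2rever}). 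This reduces everything to reversibility statements that actually hold in the needed range, which is exactly the reduction your two-arc decomposition fails to achieve.
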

Note that the case $\nu=-2$ is special: in this case, the law of $\hSLE_\kappa(\nu)$ equals the standard $\SLE_\kappa$ (see Section~\ref{sec::hyper}). The reversibility of  the $\SLE_\kappa$ curve for $\kappa\in (4,8)$ has been proved in~\cite[Theorem 1.2]{MillerSheffieldIG3}. In~\cite{WuHyperSLE}, the author proves Theorem~\ref{thm::reversibility} for $\nu\ge\frac{\kappa}{2}-4$ and $\kappa\in (0,+\infty)$. In that case, almost surely, $\hSLE_\kappa(\nu)$ curve does not hit $(x,y)$, which is essential for the proof there. In this paper, we will use a new method to prove the reversibility of the $\hSLE_\kappa(\nu)$ curve for $\kappa\in (4,8)$ and $\nu>-2$. 

The paper is organized as follows. 
\begin{itemize}
\item
In Section~\ref{sec::preliminaries}, we recall the definition of $\hSLE_\kappa(\nu)$ and $\SLE_\kappa(\underline\rho)$. 
\item
In Section~\ref{sec::connection}, we complete the proof of Theorem~\ref{thm::decom} by constructing suitable martingales.
\item
In Section~\ref{sec::proof}, we complete the proof of Theorem~\ref{thm::reversibility} by constructing a pair of random curves $(\eta_1,\eta_2)$. The marginal law of  $\eta_1$ equals $\hSLE_\kappa(\nu)$ and the conditional law of $\eta_1$ given $\eta_2$ equals $\SLE_\kappa$. We conclude the proof by showing the reversibility of $\eta_2$. The proof is based on the reversibility of $\SLE_\kappa(\rho)$~\cite[Theorem 1.2]{MillerSheffieldIG3}.
\end{itemize}
In this paper, we define $B(z,r):=\{w\in\C: |w-z|<r\}$ for $z\in\C$.
%Finally, we will consider the Edwards-Sokal coupling of Ising model and FK-Ising model.

%%%%%%%%%%%%%%%%

\section{Preliminaries}
\label{sec::preliminaries}

\subsection{Curve space}
\label{sec::curve}
A curve is defined by a continuous map from $[0, 1]$ to $\C$. Let $\mathcal{C}$ be the space of unparameterized curves in $\C$.  Define the metric on $\mathcal{C}$ as follows:
\begin{align}\label{eqn::curves_metric}
d(\gamma_1, \gamma_2):=\inf\sup_{t\in[0,1]}\left|\hat{\gamma}_1(t)-\hat{\gamma}_2(t)\right|,
\end{align}
where the infimum is taken over all the choices of  parameterizations  $\hat{\gamma}_1$ and $\hat{\gamma}_2$ of $\gamma_1$ and $\gamma_2$. The metric space $(\mathcal{C}, d)$ is complete and separable.

\subsection{Loewner chain and $\SLE$}
\label{sec::SLE}
An $\HH$-hull $K$ is a compact set of $\overline{\HH}$ such that $\HH\setminus K$ is simply connected. For a collection of growing $\HH$-hulls $(K_t:t\ge 0)$, they corresponds to a families of conformal maps $(g_t: t\ge 0)$ such that for every $t\ge 0$, $g_t$ is the conformal map from the unbounded connected component of $\HH\setminus K_t$ onto $\HH$ with the following normalization:
\[\lim_{z\to\infty}|g_t(z)-z|=0.\]
The conformal maps $(g_t: t\ge 0)$ satisfies the following Loewner equation: for $z\in\overline{\HH}$, 
\[\partial_t g_t(z)=\frac{2}{g_t(z)-W_t},\quad g_0(z)=z, \]
where $(W_t: t\ge 0)$ is a real-valued function, which is called the driving function. When $W_t=\sqrt\kappa B_t$, the corresponding $\HH$-hulls $(K_t:t\ge 0)$ is the $\SLE_\kappa$ process. In~\cite{RohdeSchrammSLEBasicProperty}, it has been proved that $\SLE_\kappa$ process is generated by a continuous curve almost surely for $\kappa\neq 8$. The same result for the case $\kappa=8$ is obtained from the convergence of uniform spanning tree~\cite{LawlerSchrammWernerLERWUST}. 

\subsection{Hypergeometric $\SLE$}
\label{sec::hyper}
The definition of $\hSLE_\kappa(\nu)$ involves the hypergeometric function. We define the function $F:[0,1]\to \R$ as
\[F(z):={}_2F_1\left(\frac{2\nu+4}{\kappa},1-\frac{4}{\kappa},\frac{2\nu+8}{\kappa};z\right).\]
The function $F$ satisfies the following ODE:
\begin{equation}\label{eqn::ODE}
z(1-z)F''(z)+\left(\frac{2\nu+8}{\kappa}-\frac{2\nu+2\kappa}{\kappa}z\right)F'(z)-\frac{2\nu+4}{\kappa}\times\left(1-\frac{4}{\kappa}\right) F(z)=0.
\end{equation}
When $\kappa\in(4,8)$ and $\nu>\frac{\kappa}{2}-6$, $F$ is a bounded and positive function on $[0,1]$ and we have the following integral formula (for instance, see~\cite[Eq.~15.3.1]{AbramowitzHandbook}): 
\begin{equation}\label{eqn::integral}
F(z)=\frac{1}{B\left(\frac{\kappa-4}{\kappa},\frac{2\nu+12-\kappa}{\kappa}\right)}\int_0^1 x^{-\frac{4}{\kappa}}(1-x)^{\frac{2\nu+12-2\kappa}{\kappa}}(1-zx)^{-\frac{2\nu+4}{\kappa}} dx,
\end{equation}
where $B(\cdot,\cdot)$ is the Beta function.
Note that the requirement $\kappa\in(4,8)$ and $\nu>\frac{\kappa}{2}-6$ implies that the integral is finite. The $\hSLE_\kappa(\nu)$ process from $0$ to $\infty$ with force points $x$ and $y$ is a random Loewner chain, whose driving function satisfies the following SDEs:
\begin{align}\label{eqn::hyperSDE}
\begin{cases}
d W_t=\sqrt{\kappa}d B_t + \frac{(\nu+2)dt}{W_t-V_t^{x}}+\frac{-(\nu+2)dt}{W_t-V_t^{y}}-\kappa \frac{F'\left(\frac{V_t^x-W_t}{V_t^y-W_t}\right)}{F\left(\frac{V_t^x-W_t}{V_t^y-W_t}\right)}\times\frac{V_t^y-V_t^x}{(V_t^y-W_t)^2}d t, \quad W_0=0 ; \\
d V_t^x = \frac{2 \, d t}{V_t^x-W_t}, \quad d V_t^y = \frac{2 \, d t}{V_t^y-W_t},\quad V_0^x=x \text{ and }V_0^y=y;
\end{cases}
\end{align}
where $(B_t)_{t\ge 0}$ is the standard one-dimentional Brownian motion. In particular, if $\nu=-2$, the $\hSLE_\kappa(\nu)$ curve equals the standard $\SLE_\kappa$ curve. In~\cite[Proposition 3.3]{WuHyperSLE}, the author proves that~\eqref{eqn::hyperSDE} is well-defined for all time. Moreover, when $\kappa\in(4,8)$ and $\nu>\frac{\kappa}{2}-6$, almost surely, $\hSLE_\kappa(\nu)$ process is generated by a continuous curve from $0$ to $\infty$ and it hits $(y,\infty)$. After the hitting time at $(y,\infty)$, $\hSLE_\kappa(\nu)$ curve evolves as the standard $\SLE_\kappa$ process.
%(Note that $\frac{\kappa}{2}-4>-2$ when $\kappa\in (4,8)$, this is indeed an improvement).

\subsection{$\SLE_\kappa(\underline\rho)$ process and Gaussian free field}
\label{sec::reversibility}
We denote the $\SLE_\kappa(\rho_1^L,\ldots,\rho_n^L;\rho_1^R,\ldots,\rho_m^R)$ process by  $\SLE_\kappa(\underline\rho)$ for simplicity. The $\SLE_\kappa(\underline\rho)$ process starting from $0$ to $\infty$ with marked points $x_n^L<\cdots<x_1^L\le 0\le x_1^R<\cdots<x_m^R$ and weights $(\rho_1^L,\ldots,\rho_n^L;\rho_1^R,\ldots,\rho_m^R)$ is a random Loewner chain whose driving function $W$ satisfies the following SDEs:
\begin{align}\label{eqn::rhoSDE}
\begin{cases}
d W_t=\sqrt{\kappa}d B_t + \sum_{1\le i\le n}\frac{\rho_i^L dt}{W_t-V_t^{i,L}}+\sum_{1\le i\le m}\frac{\rho_i^R dt}{W_t-V_t^{i,R}},\quad W_0=0; \\
d V_t^{i,L} = \frac{2d t}{V_t^{i,L}-W_t},\quad V_0^{i,L}=x_i^L \text{ for }1\le i\le n;\quad d V_t^{i,R} = \frac{2d t}{V_t^{i,R}-W_t},\quad V_0^{i,R}=x_i^R \text{ for }1\le i\le m;
\end{cases}
\end{align}
where $(B_t)_{t\ge 0}$ is the standard one-dimentional Brownian motion. 
We emphasize that $x_1^L=0$ and $x_1^R=0$ are allowed. In this case, we always have $V_t^{1,L}\le 0$ and $V_t^{1,R}\ge 0$ and we will denote by $x_1^L=0^-$ and denote by $x_1^R=0^+$. 
It has been shown in~\cite{MillerSheffieldIG1} that~\eqref{eqn::rhoSDE} has a unique solution up to the continuation threshold: the first time $t$ such that $W_t=V_t^{j,q}$ and $W_t\neq V_t^{j+1,q}$ and $\sum_{i=1}^j\rho_i^q\le -2$ for some $j$ and $q\in\{L,R\}$. (Here we use the convention that $V_t^{n+1,L}=-\infty$ and $V_t^{m+1,R}=+\infty$.) 

Now, we introduce the definition of GFF (see~\cite{SheffieldGFFMath} for more details). Suppose $\Omega$ is a simply connected domain. The GFF on $\Omega$ with boundary value $h_\partial$, which we denote by $h+h_\partial$, is a random distribution on $\Omega$ such that for every $f\in C^\infty_c(\Omega)$, $(h,f)$ is a Gaussian variable with
\[\E[(h,f)]=\int \hat h_\partial(x)f(x)d x,\quad\text{and}\quad \E[(h,f)^2]=\int_\Omega f(x)G_\Omega(x,y)f(y)d xd y,\]
where $\hat h_\partial$ is the harmonic extension of $h_\partial$ onto $\Omega$ and $G_\Omega(\cdot,\cdot)$ is the Green's function with Dirichlet boundary condition and it satisfies that $G_\Omega(x,y)+\log |x-y|$ is bounded when $y$ tends to $x$.

Fix $x_n^L<\cdots<x_1^L\le 0\le x_1^R<\cdots<x_m^R$ and $(\rho_1^L,\ldots,\rho_n^L;\rho_1^R,\ldots,\rho_m^R)$. Fix $\kappa>0$ and define $\lambda:=\frac{\pi}{\sqrt\kappa}$.
We define $u$ to be the bounded harmonic function with the following boundary conditions: $u$ equals $\lambda(1+\sum_{j=0}^i \rho_j^R)$ on the interval $(x_i^R,x_{i+1}^R)$ for $0\le i\le m$ and  $u$ equals $-\lambda(1+\sum_{j=0}^i \rho_j^L)$ on the interval $(x_{i+1}^L,x_{i}^L)$ for $0\le i\le n$, where we use the convention that $x_0^L=0^-$ and $x_0^R=0^+$ and $\rho_0^L=\rho_0^R=0$ and $x_{n+1}^{L}=-\infty$ and $x_{m+1}^R=+\infty$.
In~\cite{MillerSheffieldIG1}, the authors establish the coupling of GFF and the $\SLE_\kappa(\rho_1^L,\ldots,\rho_n^L;\rho_1^R,\ldots,\rho_m^R)$ curve, such that under this coupling, the $\SLE_\kappa(\rho_1^L,\ldots,\rho_n^L;\rho_1^R,\ldots,\rho_m^R)$ curve can be viewed as flow line of $h+u$. Several flow lines can be coupled with the same GFF. In this paper, we will use the following property: if $y_1<y_2$ and $\eta_1$ is the flow line of $h-\theta_1$ and $\eta_2$ is the flow line of $h-\theta_2$ such that $\theta_1<\theta_2$, then almost surely, $\eta_2$ stays to the right of $\eta_1$. See~\cite{MillerSheffieldIG1} for more details.
%%%%%%%%%%%%%%%%

\label{reversibilityhsle}

\section{Proof of Theorem~\ref{thm::decom}}
\label{sec::connection}
In this section, we will prove Theorem~\ref{thm::decom}. Suppose $\kappa\in (4,8)$ and $\nu>\frac{\kappa}{2}-6$ and fix $0<x<y$. Suppose $\eta$ is the $\hSLE_{\kappa}(\nu)$ curve on $\HH$ with force points $x$, $y$. We denote by $W$ the driving function of $\eta$ and by $(g_t:t\ge 0)$ the corresponding conformal maps. For $z\in \R$, we denote by $(V_t(z): t\ge 0)$ the corresponding Loewner evolution: $(V_t(z): t\ge 0)$ is a bounded variational process which satisfies the following equation
\begin{equation}\label{eqn::loewner}
dV_t(z)=\frac{2dt}{V_t(z)-W_t},\quad V_0(z)=z,\quad \text{for }t\ge 0.
\end{equation}
By~\cite[Lemma 3.3]{MillerSheffieldIG2}, if $\Leb(\eta\cap\R)=0$, then $V_t(z)$ is well-defined for $t\in(0,+\infty)$. Moreover, for every $z\ge 0$, $V_t(z)$ is the image of $\max\{z,\text{ the rightmost point of }\eta[0,t]\cap\R\}$ under $g_t$; and for every $z\le 0$, $V_t(z)$ is the image of $\min\{z,\text{ the leftmost point of }\eta[0,t]\cap\R\}$ under $g_t$. 
\begin{lemma}\label{lem::variationalprocess}
Almost surely, we have $\Leb(\eta\cap\R)=0$.
\end{lemma}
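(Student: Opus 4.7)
The plan is to reduce the statement to the corresponding known fact for standard $\SLE_\kappa$ with $\kappa\in(4,8)$: $\Leb(\tilde\eta\cap\R)=0$ a.s., which follows e.g.\ from the Alberts--Sheffield bound $\dimH(\tilde\eta\cap\R)=2-8/\kappa<1$. I would decompose $\eta=\eta[0,\tau]\cup\eta[\tau,\infty)$ using the hitting time $\tau$ at $(y,+\infty)$ (well-defined a.s.\ under the standing hypothesis $\nu>\kappa/2-6$, by Section~\ref{sec::hyper}) and treat the two pieces separately.

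For $\eta|_{[\tau,\infty)}$: the last paragraph of Section~\ref{sec::hyper} states that, conditionally on $\eta[0,\tau]$, this piece is standard $\SLE_\kappa$ from $\eta(\tau)$ to $\infty$ in the unbounded component $D$ of $\HH\setminus\eta[0,\tau]$. A conformal map $\varphi:D\to\HH$ transports it to chordal $\SLE_\kappa$ in $\HH$; since $\varphi$ extends analytically across the open sub-arcs of $\partial D\cap\R$ by Schwarz reflection, the $\SLE_\kappa$ null property pulls back to $\Leb(\eta[\tau,\infty)\cap\R)=0$.

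For $\eta|_{[0,\tau]}$: I would use local absolute continuity with respect to $\SLE_\kappa$. For $\epsilon>0$ define
$$\sigma_\epsilon:=\inf\Bigl\{t\ge 0:\min(V_t^x-W_t,\,V_t^y-W_t)\le\epsilon\ \text{or}\ \tfrac{V_t^x-W_t}{V_t^y-W_t}\ge 1-\epsilon\Bigr\}\wedge\tau.$$
On $[0,\sigma_\epsilon]$ every drift coefficient in~\eqref{eqn::hyperSDE} is uniformly bounded: the rational factors $(V_t^x-W_t)^{-1}$ and $(V_t^y-W_t)^{-1}$ are at most $\epsilon^{-1}$, and the hypergeometric factor $F'(w)/F(w)$ at $w=(V_t^x-W_t)/(V_t^y-W_t)\in[0,1-\epsilon]$ is bounded because $F$ is analytic and strictly positive on $[0,1)$ (by~\eqref{eqn::integral}). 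By Girsanov, the law of $\eta|_{[0,\sigma_\epsilon]}$ is absolutely continuous with respect to that of a standard $\SLE_\kappa$ run for the same duration, so the cited fact transfers to $\Leb(\eta[0,\sigma_\epsilon]\cap\R)=0$. Letting $\epsilon\downarrow 0$, $\sigma_\epsilon\uparrow\tau$ on the event that neither force point is swallowed strictly before $\tau$ (the controlled quantities then stay continuous and strictly in the admissible range on $[0,\tau)$), so monotone convergence yields $\Leb(\eta[0,\tau)\cap\R)=0$; on the complementary event the argument is iterated in the reduced domain after the first swallowing time. The isolated point $\eta(\tau)$ contributes nothing.

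The main obstacle I anticipate is verifying the uniform drift bound on $[0,\sigma_\epsilon]$, particularly the behavior of the hypergeometric ratio $F'/F$, together with the iteration needed when a force point is swallowed strictly before $\tau$. In that latter case, after the swallowing of one force point the $\hSLE_\kappa(\nu)$ driving equation~\eqref{eqn::hyperSDE} effectively drops the corresponding drift and becomes an SDE of the same structure but with one fewer force point, to which the absolute-continuity argument applies again in the reduced domain. A smaller technicality is the boundary regularity in the post-$\tau$ step, resolved by the Schwarz-reflection analyticity of $\varphi$.
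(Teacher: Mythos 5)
Your decomposition at the hitting time $\tau$ of $(y,+\infty)$, and your treatment of the post-$\tau$ piece as a standard $\SLE_\kappa$ in the remaining domain, coincide with the paper's proof. The gap is in the pre-$\tau$ piece. Your stopping times $\sigma_\eps$ do not exhaust $[0,\tau)$: for $\kappa\in(4,8)$ and $\nu<\frac{\kappa}{2}-4$ (a nonempty part of the range $\nu>\frac{\kappa}{2}-6$), the curve hits $[x,y]$ strictly before $\tau$ with positive probability, at which moment $V_t^x-W_t=0$ and $\sup_\eps\sigma_\eps<\tau$. Worse, the proposed iteration ``in the reduced domain after the first swallowing time'' is not available. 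After $x$ is swallowed, the drift term $\frac{(\nu+2)\,dt}{W_t-V_t^x}$ in \eqref{eqn::hyperSDE} does not drop out; $V_t^x$ simply tracks the image of the rightmost point of $\eta[0,t]\cap\R$, and the set $\{t<\tau: W_t=V_t^x\}$ is a non-discrete (Cantor-like) set of times accumulating at the first hitting time, so there is no sequence of ``swallowing times'' over which to iterate. On any time interval containing such times the law of $\eta$ is mutually singular with that of standard $\SLE_\kappa$ (the candidate Radon--Nikodym derivative degenerates), so no Girsanov comparison to standard $\SLE_\kappa$ can cover all of $[0,\tau)$.

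The repair --- and the paper's actual route --- is to compare against a reference process that \emph{retains} the force point at $x$ with the same weight, so that the singular drift term cancels in the Radon--Nikodym derivative: before $\tau$, $\eta$ is absolutely continuous with respect to $\SLE_\kappa(\nu+2,\kappa-6-\nu)$ with marked points $x,y$ (the density involves only the hypergeometric factor, which is bounded above and below for $\nu>\frac{\kappa}{2}-6$), and hence, away from $y$, with respect to $\gamma\sim\SLE_\kappa(\nu+2)$ with the single marked point $x$. One then concludes not from a dimension bound for standard $\SLE_\kappa$ but from the pointwise statement of \cite[Lemma 2.5]{WuHyperSLE}, namely $\PP[w\in\gamma]=0$ for each fixed boundary point $w$, which gives $\E[\Leb(\gamma\cap\R)]=0$ by Fubini and transfers to $\eta[0,\tau]$ by absolute continuity. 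Your bounded-drift computation on $[0,\sigma_\eps]$ is correct as far as it goes, but it only controls the curve up to the first contact with $[x,y]$, not up to $\tau$.
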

\begin{proof}
Recall that we denote by $\tau$ the hitting time of $\eta$ at $(y,+\infty)$. After time $\tau$, the random curve $\eta$ will evolve as a standard $\SLE_{\kappa}$ in the remaining domain. Thus, we have almost surely,
\begin{equation}\label{eqn::aux000}
\Leb(\eta[\tau,+\infty)\cap\R)=0.
\end{equation}
Before time $\tau$, the random curve $\eta$ is absolutely continuous with respect to $\SLE_{\kappa}(\nu+2,\kappa-6-\nu)$ curve with marked points $x$ and $y$ (see discussion after~\cite[Proposition 3.3]{WuHyperSLE}).
Thus, before time $\tau$, the random curve $\eta$ is absolutely continuous with respect to $\gamma\sim\SLE_{\kappa}(\nu+2)$ with marked point $x$.
By~\cite[Lemma 2.5]{WuHyperSLE}, for every $w\in (0,+\infty)$, we have
\[\PP[w\in\gamma]=0.\]
This implies that almost surely, $\Leb(\gamma\cap\R)=0$. 
From the absolute continuity, we have that almost surely,
\[\Leb(\eta[0,\tau]\cap\R)=0.\]
Combining with~\eqref{eqn::aux000}, we complete the proof.
\end{proof}
\begin{lemma}\label{lem::deco}
\begin{itemize}
\item
Recall that we denote by $\tau$ the hitting time of $\eta$  at $(y,+\infty)$. For $z\in (y,+\infty)$, we define
\[M_t(z):=\frac{V'_t(z)(V_t(z)-W_t)^{-\frac{4}{\kappa}}(V_t(z)-V_t(x))^{-\frac{2(\nu+2)}{\kappa}}(V_t(z)-V_t(y))^{-\frac{2({\kappa}-6-\nu)}{\kappa}}(V_t(y)-W_t)^{\frac{\kappa-4}{\kappa}}}{F\left(\frac{V_t(x)-W_t}{V_t(y)-W_t}\right)}.\]
Then, $\{M_{t\wedge\tau}\}_{t\ge 0}$ is a local martingale for $\eta$. 
\item
Moreover, we define $\tau_\eps$ to be the hitting time of $\eta$ at the union of $\eps$-neighborhood of $(y,+\infty)$ and $\partial B\left(0,\frac{1}{\eps}\right)$. Then, $\{M_{t\wedge\tau_\eps}\}_{t\ge 0}$ is a martingale. Weighted by $\frac{M_{t\wedge\tau_\eps}}{M_0}$, the random curve $\eta[0,\tau_\eps]$ has the same law as the $\SLE_{\kappa}(\nu+2,{\kappa}-6-\nu,-4)$ stopped at the hitting time of the $\eps$-neighbour of $(y,+\infty)$.
\end{itemize}
\end{lemma}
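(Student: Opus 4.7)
The approach is to prove the local martingale property by Itô calculus on $\log M_t$, using the hypergeometric ODE~\eqref{eqn::ODE} to cancel the drift, and then to deduce the weighted-law statement via Girsanov's theorem. Set $X_t := V_t(x) - W_t$, $Y_t := V_t(y) - W_t$, $U_t := V_t(z) - W_t$, $R_t := X_t/Y_t$, so that
\[
\log M_t = \log V'_t(z) - \tfrac{4}{\kappa}\log U_t - \tfrac{2(\nu+2)}{\kappa}\log(U_t - X_t) - \tfrac{2(\kappa-6-\nu)}{\kappa}\log(U_t - Y_t) + \tfrac{\kappa-4}{\kappa}\log Y_t - \log F(R_t).
\]
From~\eqref{eqn::loewner} and~\eqref{eqn::hyperSDE}, $d\log V'_t(z) = -2U_t^{-2}\,dt$; each of $U_t, X_t, Y_t$ has martingale part $-\sqrt\kappa\,dB_t$ and a drift built from the Loewner piece $2/\cdot$ and the common piece $-D_t$, where $D_t$ denotes the bounded-variation drift of $W_t$ in~\eqref{eqn::hyperSDE}; the differences $U_t - X_t, U_t - Y_t$ are of bounded variation, giving $d\log(U_t - X_t) = -2/(U_tX_t)\,dt$ and $d\log(U_t - Y_t) = -2/(U_tY_t)\,dt$; and $d\langle R\rangle_t = \kappa(X_t - Y_t)^2/Y_t^4\,dt$. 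Collecting Brownian terms, the $dB_t$-coefficient of $d\log M_t$ is
\[
\sigma_t = \frac{4}{\sqrt\kappa\,U_t} - \frac{\kappa-4}{\sqrt\kappa\,Y_t} - \frac{\sqrt\kappa\,(X_t - Y_t)}{Y_t^2}\cdot\frac{F'(R_t)}{F(R_t)}.
\]

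I would then verify that the drift of $d\log M_t$ equals $-\sigma_t^2/2$, which implies $dM_t/M_t = \sigma_t\,dB_t$ and thus the local martingale property on $[0,\tau]$. This drift collects the deterministic $-2/U_t^2$ from $\log V'_t(z)$, the Itô corrections $-\kappa/(2U_t^2)$ and $-\kappa/(2Y_t^2)$ from $\log U_t, \log Y_t$, the finite-variation drifts of $\log U_t, \log X_t, \log Y_t$ (both the Loewner contribution and the $-D_t/\cdot$ contribution), the $-2/(U_tX_t), -2/(U_tY_t)$ contributions from the two difference logarithms, and from $-\log F(R_t)$ the piece $-(F'(R_t)/F(R_t))\cdot(\text{drift of }R_t) - \tfrac12(F''/F - (F'/F)^2)(R_t)\cdot d\langle R\rangle_t$. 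The key identity is that after substituting $d\langle R\rangle_t$ and recognizing $R_t(1-R_t) = X_t(Y_t - X_t)/Y_t^2$, the ODE~\eqref{eqn::ODE} applied at $R_t$ converts the combination of $F''/F$, $F'/F$, and $(F'/F)^2$ contributions into a pure rational function of $X_t, Y_t, U_t$; this remainder together with the polynomial drifts then reconstructs $-\sigma_t^2/2$.

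For the second claim, on $[0,\tau_\epsilon]$ the distances $U_t, V_t(z) - V_t(x), V_t(z) - V_t(y), Y_t$ are all bounded away from $0$ (the first by the $\epsilon$-separation from $(y,\infty)$, the rest by monotonicity of the Loewner flow together with the confinement to $B(0, 1/\epsilon)$), $V'_t(z)$ is controlled by the standard distortion estimate, and $F$ is bounded between two strictly positive constants on $[0,1]$; hence $\{M_{t\wedge\tau_\epsilon}\}$ is bounded, thus a true martingale. Under the Girsanov change of measure by $M_{t\wedge\tau_\epsilon}/M_0$, the drift of $W_t$ is shifted by $\sqrt\kappa\,\sigma_t\,dt$; the two $F'/F$ contributions in $\sqrt\kappa\,\sigma_t$ and in~\eqref{eqn::hyperSDE} cancel exactly, and the resulting total drift becomes
\[
\frac{\nu+2}{W_t - V_t(x)} + \frac{\kappa - 6 - \nu}{W_t - V_t(y)} + \frac{-4}{W_t - V_t(z)},
\]
which is precisely~\eqref{eqn::rhoSDE} for $\SLE_\kappa(\nu+2, \kappa-6-\nu, -4)$ with marked points $x, y, z$. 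The main obstacle is the ODE-driven drift cancellation: matching the $F''$, $F'$, and $(F')^2$ contributions and checking that the non-$F$ remainder reconstructs $-\sigma_t^2/2$ is a mechanical but delicate calculation requiring careful tracking of signs and prefactors.
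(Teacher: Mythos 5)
Your proposal is correct and follows essentially the same route as the paper: It\^o calculus on the martingale (you via $\log M_t$, the paper via the quotient of numerator and denominator), cancellation of the drift through the hypergeometric ODE~\eqref{eqn::ODE} evaluated at $Z_t=R_t$, boundedness of $M$ up to $\tau_\eps$ to upgrade to a true martingale, and Girsanov to identify the tilted law as $\SLE_\kappa(\nu+2,\kappa-6-\nu,-4)$. Your computed Brownian coefficient $\sigma_t$ and the post-tilt drift match the paper's expressions exactly, so the remaining drift verification is the same mechanical check the paper performs.
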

\begin{proof}
For the first conclusion, we define $L_t$ to be the numerator of $M_t$ and define $F_t$ to be the denominator of $M_t$. Define $Z_t:=\frac{V_t(x)-W_t}{V_t(y)-W_t}$. By I\^to's formula, we have
\begin{align}\label{eqn::aux1}
dL_t=&L_t\left(\frac{4}{{\kappa}}\frac{1}{V_t(z)-W_t}+\frac{4-{\kappa}}{{\kappa}}\frac{1}{V_t(y)-W_t}\right)dW_t\notag\\
&+L_t\left(\frac{2(\nu+2)}{{\kappa}}\frac{2}{(V_t(z)-W_t)(V_t(x)-W_t)}-\frac{2(2+\nu)}{{\kappa}}\frac{2}{(V_t(z)-W_t)(V_t(y)-W_t)}\right)dt,
\end{align}
and
\begin{align}\label{eqn::aux2}
dF_t&=F'\left(Z_t\right)d Z_t+\frac{{\kappa}}{2}F''\left(Z_t\right)\times \frac{(V_t(x)-V_t(y))^2}{(V_t(y)-W_t)^4}dt\notag\\
&=F'\left(Z_t\right)\times\frac{V_t(y)-V_t(x)}{(V_t(y)-W_t)^2}\left(\frac{2dt}{V_t(x)-W_t}+\frac{(2-{\kappa})dt}{V_t(y)-W_t}-dW_t\right)+\frac{{\kappa}}{2}F''\left(Z_t\right)\times \frac{(V_t(x)-V_t(y))^2}{(V_t(y)-W_t)^4}dt.
\end{align}
and
\begin{align}\label{eqn::aux3}
dM_t(z)=\frac{dL_t}{F_t}-\frac{L_t}{F_t^2}d F_t+\frac{L_t}{F_t^3}d\langle F_t\rangle-\frac{1}{F_t^2}d\langle L_t,F_t\rangle.
\end{align}
Plugging~\eqref{eqn::hyperSDE},~\eqref{eqn::aux1},~\eqref{eqn::aux2}  into~\eqref{eqn::aux3}, we have
\begin{align*}
dM_t(z)
=M_t(z)\left(\frac{4}{
\sqrt{\kappa}}\frac{1}{V_t(z)-W_t}+\frac{4-{\kappa}}{\sqrt{\kappa}}\frac{1}{V_t(y)-W_t}+\sqrt{\kappa}\frac{F'(Z_t)}{F(Z_t)}\frac{V_t(y)-V_t(x)}{(V_t(y)-W_t)^2}\right)dB_t+dR_t,
\end{align*}
where $R_t$ is the drift term. Moreover, we have
\begin{align*}
dR_t=&-\frac{{\kappa} L_t}{2F_t^2}\times\frac{V_t(y)-V_t(x)}{(V_t(y)-W_t)^2(V_t(x)-W_t)}\\
&\times\left(Z_t(1-Z_t)F''(Z_t)+\left(\frac{2\nu+8}{{\kappa}}-\frac{2\nu+2{\kappa}}{{\kappa}}Z_t\right)F'(Z_t)-\frac{2\nu+4}{{\kappa}}\times\left(1-\frac{4}{{\kappa}}\right) F(Z_t)\right)dt.
\end{align*}
Thus, by~\eqref{eqn::ODE}, we have that the drift term of $\{M_{t\wedge\tau}\}_{t\ge 0}$ equals $0$ and this implies that $\{M_{t\wedge\tau}\}_{t\ge 0}$ is a local martingale for $\eta$.

For the second conclusion, note that for $t\le \tau_\eps$, $M_t(z)$ is uniformly bounded. Thus, $\{M_{t\wedge\tau_\eps}(z)\}_{t\ge 0}$ is a martingale. Recall the definition of $\eta$ given in~\eqref{eqn::hyperSDE}. By Girsanov theorem, we have that weighted by $M_{t\wedge\tau_\eps}/M_0$, the random curve $\eta[0,\tau_\eps]$ has the same law of the $\SLE_{\kappa}(\nu+2,{\kappa}-6-\nu,-4)$ stopped at the hitting time of the union of $\eps$-neighbour of $(y,+\infty)$ and $\partial B(0,\frac{1}{\eps})$. This completes the proof.
\end{proof}
\begin{proof}[Proof of Theorem~\ref{thm::decom}]
We first prove~\eqref{eqn::dis}. We will use the same notations as in Lemma~\ref{lem::deco}. Fix $w\in [y,+\infty)$. Define
\begin{align*}
N_t=N_t(w):=\frac{1}{B\left(\frac{\kappa-4}{{\kappa}},\frac{2\nu+12-{\kappa}}{{\kappa}}\right)}\int_{w}^{+\infty}M_{t}(z)dz.
\end{align*}
Recall that we denote by $\tau$ the hitting time of $\eta$ at $(y,+\infty)$.
By Lemma~\ref{lem::deco}, $\{N_{t\wedge\tau}\}_{t\ge 0}$ is a local martingale for $\eta$. Moreover, for $t<\tau$, we have
\begin{align}
N_t&=\frac{1}{B\left(\frac{\kappa-4}{{\kappa}},\frac{2\nu+12-{\kappa}}{{\kappa}}\right)}\int_{w}^{+\infty}M_{t}(z)dz\notag\\
&=\frac{\int_w^{+\infty}dz V'_t(z)(V_t(z)-W_t)^{-\frac{4}{\kappa}}(V_t(z)-V_t(x))^{-\frac{2(\nu+2)}{\kappa}}(V_t(z)-V_t(y))^{-\frac{2({\kappa}-6-\nu)}{\kappa}}(V_t(y)-W_t)^{\frac{\kappa-4}{\kappa}}}{B\left(\frac{\kappa-4}{{\kappa}},\frac{2\nu+12-{\kappa}}{{\kappa}}\right) F\left(\frac{V_t(x)-W_t}{V_t(y)-W_t}\right)}\notag\\
&=\frac{\int_{\frac{V_t(w)-W_t}{V_t(y)-W_t}}^{+\infty}u^{-\frac{4}{{\kappa}}}(u-1)^{\frac{2({\kappa}-6-\nu)}{{\kappa}}}\left(u-Z_t\right)^{\frac{2(\nu+2)}{{\kappa}}}du}{\int_0^1 x^{-\frac{4}{{\kappa}}}(1-x)^{\frac{2\nu+12-2{\kappa}}{{\kappa}}}(1-Z_tx)^{-\frac{2\nu+4}{{\kappa}}} dx}\tag{due to~\eqref{eqn::integral} and setting $u=\frac{V_t(z)-W_t}{V_t(y)-W_t}$}\\
&=\frac{\int_{\frac{V_t(w)-W_t}{V_t(y)-W_t}}^{+\infty}u^{-\frac{4}{{\kappa}}}(u-1)^{\frac{2({\kappa}-6-\nu)}{{\kappa}}}\left(u-Z_t\right)^{\frac{2(\nu+2)}{{\kappa}}}du}{\int_{1}^{+\infty}v^{-\frac{4}{{\kappa}}}(v-1)^{\frac{2({\kappa}-6-\nu)}{{\kappa}}}\left(v-Z_t\right)^{\frac{2(\nu+2)}{{\kappa}}}dv}.\tag{setting $v=\frac{1}{x}$}
\end{align}
This implies that $N_t\le 1$ for every $t<\tau$. Thus, $\{N_{t\wedge\tau}\}$ is a uniformly integrable martingale. \begin{itemize}
\item
On the event $\{\eta\text{ hits }(y,+\infty)\text{ at }(w,+\infty)\}$, one has
\begin{equation}\label{eqn::aux11}
\lim_{t\to \tau}\frac{V_{t}(w)-W_t}{V_t(y)-W_t}=1,\quad\lim_{t\to\tau} Z_t=1.
\end{equation}
The proof of~\eqref{eqn::aux11} is standard, for instance, see~\cite[Lemma B.2]{PeltolaWuGlobalMultipleSLEs}.
Thus, in this case, by dominated convergence theorem, we have
\[\lim_{t\to\tau} N_t=1.\]
\item
On the event $\{\eta\text{ hits }(y,+\infty)\text{ at }(y,w)\}$, one has
\begin{equation}\label{eqn::aux12}
\lim_{t\to\tau}\frac{V_{t}(w)-W_t}{V_t(y)-W_t}=+\infty.
\end{equation}
Thus, in this case, we have 
\[\lim_{t\to\tau}N_t=0.\]
\end{itemize}
Combining these two observations together, we have
\begin{equation}\label{eqn::aux13}
N_\tau:=\lim_{t\to\tau}N_t=\one_{\{\eta\text{ hits }(y,+\infty)\text{ at }(w,+\infty)\}}.
\end{equation}
Thus, since $\{N_{t\wedge\tau}\}_{t\ge 0}$ is uniformly integrable, the stopping time theorem implies that 
\begin{align*}\PP[\eta\text{ hits }(y,+\infty)\text{ at }(w,+\infty)]=\E[N_\tau]=\E[N_0]&=\frac{\int_{\frac{w}{y}}^{+\infty}u^{-\frac{4}{{\kappa}}}(u-1)^{\frac{2({\kappa}-6-\nu)}{{\kappa}}}\left(u-\frac{x}{y}\right)^{\frac{2(\nu+2)}{{\kappa}}}du}{\int_{1}^{+\infty}u^{-\frac{4}{{\kappa}}}(u-1)^{\frac{2({\kappa}-6-\nu)}{{\kappa}}}\left(u-\frac{x}{y}\right)^{\frac{2(\nu+2)}{{\kappa}}}du}\\
&=\int_{w}^{+\infty}\rho(u)du.
\end{align*}
This finishes the proof of~\eqref{eqn::dis}.

Next, we derive the conditional law of $\eta$ given $\eta(\tau)$. For $z\in(y,+\infty)$, we denote by $\eta_z$ the $\SLE_{\kappa}(\nu+2,{\kappa}-6-\nu,-4)$ curve from $0$ to $\infty$ with marked points $x$, $y$ and $z$. 
%By~\cite[Theorem 3]{SchrammWilsonSLECoordinatechanges}, if we choose a conformal map $\phi:\HH\to\HH$ such that $\phi(0)=0$ and $\phi(z)=\infty$, then we have $\phi(\tilde\eta_z)$ has the same law as the $\SLE_{\kappa}(2;\nu+2,{\kappa}-6-\nu)$ curve from $0$ to $\infty$ with marked points $x_1^L=\phi(\infty)$ and $x_1^R=\phi(x)$, $x_2^R=\phi(y)$. By Theorem~\ref{thm::curve}, we have almost surely, $\phi(\tilde\eta_z)$ goes to $\infty$ and does not hit $(-\infty,\phi(\infty))\cup(\phi(y),+\infty)$. Thus, we have that almost surely, $\tilde\eta_z$ hits $(y,+\infty)$ at $z$. 
We define $\tau_{\eps,z}$ to be the hitting time of $\eta_z$ at the union of $\eps$-neighborhood of $(y,+\infty)$ and $\partial B\left(0,\frac{1}{\eps}\right)$. Define $\tau_z$ to be the hitting time of $\eta_z$ at $z$. Note that by~\cite[Theorem 1.3, Remark 5.3]{MillerSheffieldIG1}, almost surely, $\eta_z$ can not hit $(y,z)\cup(z,+\infty)$. Thus, $\eta_z$ either hits $z$ ($\tau_z<\infty$) or goes to $\infty$ ($\tau_z=+\infty$). We define
\[A:=\{z\in (y,+\infty): \PP[\tau_z=+\infty]>0\}.\]

Recall that by~\cite[Lemma 3.4]{WuHyperSLE}, $\tau<+\infty$ almost surely. Fix $R>0$.
\begin{align}
\E[\one_{\diam(\eta[0,\tau]))> R}]&\ge\lim_{\eps\to 0}\E[\one_{\diam(\eta[0,\tau_\eps]))> R}]\tag{due to the continuity of $\eta$}\\
&=\frac{1}{B\left(\frac{\kappa-4}{{\kappa}},\frac{2\nu+12-{\kappa}}{{\kappa}}\right)}{\lim_{\eps\to 0}}\int_{y}^{+\infty}\E\left[\one_{\diam(\eta[0,\tau_\eps]))> R}\frac{M_{\tau_\eps}(z)}{M_0(z)}\right]M_{0}(z)dz\notag\\
&=\frac{1}{B\left(\frac{\kappa-4}{{\kappa}},\frac{2\nu+12-{\kappa}}{{\kappa}}\right)}{\lim_{\eps\to 0}}\int_{y}^{+\infty}\E\left[\one_{\diam(\eta_z[0,\tau_{\eps,z}])>R}\right]M_{0}(z)dz\quad\tag{due to Lemma~\ref{lem::deco}}\\
&\ge\frac{1}{B\left(\frac{\kappa-4}{{\kappa}},\frac{2\nu+12-{\kappa}}{{\kappa}}\right)}\int_{y}^{+\infty}\E\left[\one_{\tau_z=+\infty}\right]M_{0}(z)dz\notag.
\end{align}
By letting $R\to\infty$, we have
\begin{equation}\label{eqn::aux00}
\Leb(A)=0.
\end{equation}
Suppose $f$ is a bounded continuous function on the curve space. Fix $w\in (y,+\infty)$ and define $g:=\one_{(w,+\infty)}$. We have
\begin{align}\label{eqn::conditionallaw}
\E[f(\eta[0,\tau])g(\eta(\tau))]&=\lim_{\eps\to 0}E[f(\eta[0,\tau_\eps])g(\eta(\tau))]\tag{due to the continuity of $\eta$}\\
&=\frac{1}{B\left(\frac{\kappa-4}{\kappa},\frac{2\nu+12-\kappa}{\kappa}\right)}\lim_{\eps\to 0}\E[\one_{\diam(\eta[0,\tau_\eps]))> R}N_{\tau_\eps}(w)]\notag\\
&=\frac{1}{B\left(\frac{\kappa-4}{{\kappa}},\frac{2\nu+12-{\kappa}}{{\kappa}}\right)}{\lim_{\eps\to 0}}\int_{y}^{+\infty}\E\left[f(\eta[0,\tau_\eps])\frac{M_{\tau_\eps}(z)}{M_0(z)}\right]g(z)M_{0}(z)dz\tag{due to~\eqref{eqn::dis}}\\
&=\frac{1}{B\left(\frac{\kappa-4}{{\kappa}},\frac{2\nu+12-{\kappa}}{{\kappa}}\right)}{\lim_{\eps\to 0}}\int_{y}^{+\infty}\E\left[f(\eta_z[0,\tau_{\eps,z}])\right]g(z)M_{0}(z)dz\quad\tag{due to Lemma~\ref{lem::deco}}\\
&=\frac{1}{B\left(\frac{\kappa-4}{{\kappa}},\frac{2\nu+12-{\kappa}}{{\kappa}}\right)}\int_{y}^{+\infty}\E\left[f(\eta_z[0,\tau_z])\right]g(z)M_{0}(z)dz. \quad\tag{due to the continuity of $\eta_z$ and~\eqref{eqn::aux00}}
\end{align}
Since the curve space is Polish, standard approximation method implies that the above result holds when both $f$ and $g$ are bounded and measurable. This proves that given $\eta(\tau)$, the conditional law of $\eta$ equals $\eta_z$.
This finishes the proof.
\end{proof}
From the explicit form in~\eqref{eqn::dis}, we can connect it to the conformal map $f$ in Figure~\ref{fig::conformal}.
\begin{figure}[ht!]
\begin{center}
\includegraphics[width=0.5\textwidth]{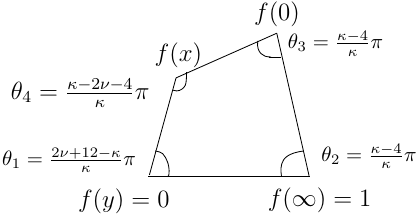}
\end{center}
\caption{\label{fig::conformal}
This is an illustration image under $f$. We require $f(y)=0$ and $f(\infty)$=1. The corresponding angles of the quad are shown in picture.}
%The right picture is an illustration of $\gamma$,$\tilde\gamma$ and $u_{\tau,\tilde\tau}$ when $\frac{\kappa}{2}-6<\nu<-2$. (Here $\tilde\tau$ is the hitting time at $y$. }
\end{figure}
By Schwartz-Christoffel formula, the conformal map $f$ has the following explicit form:
\[f(z)=\frac{\int_{y}^z v^{-\frac{4}{\kappa}}(v-y)^{\frac{2\nu+12-2\kappa}{\kappa}}\left(v-x\right)^{-\frac{2\nu+4}{\kappa}}dv}{\int_{y}^\infty v^{-\frac{4}{\kappa}}(v-y)^{\frac{2\nu+12-2\kappa}{\kappa}}\left(v-x\right)^{-\frac{2\nu+4}{\kappa}}dv}.\]
\begin{corollary}
Assume the same setup as in Theorem~\ref{thm::decom}. Then, $f(\eta(\tau))$ has the uniform distribution on $[0,1]$.
\end{corollary}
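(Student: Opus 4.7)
The plan is to recognize that $f$ is (up to a change of variable from the explicit Schwartz–Christoffel formula) precisely the cumulative distribution function of $\eta(\tau)$, and then invoke the elementary fact that for a real random variable with continuous CDF $F$, the pushforward $F(X)$ is uniform on $[0,1]$.

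Concretely, I would first differentiate the explicit expression for $f$ given just before the corollary. Since
\[
f(z)=\frac{\int_{y}^z v^{-\frac{4}{\kappa}}(v-y)^{\frac{2\nu+12-2\kappa}{\kappa}}(v-x)^{-\frac{2\nu+4}{\kappa}}\,dv}{\int_{y}^{\infty} v^{-\frac{4}{\kappa}}(v-y)^{\frac{2\nu+12-2\kappa}{\kappa}}(v-x)^{-\frac{2\nu+4}{\kappa}}\,dv},
\]
the fundamental theorem of calculus gives $f'(u)=\rho(u)$ for $u\in(y,+\infty)$, where $\rho$ is the density from~\eqref{eqn::dis}. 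One also checks directly from the formula that $f(y)=0$ and $f(u)\to 1$ as $u\to+\infty$, and that $f$ is strictly increasing on $(y,+\infty)$ because the integrand is strictly positive there.

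Combining these, $f\colon(y,+\infty)\to(0,1)$ is a continuous, strictly increasing bijection and coincides with the CDF of $\eta(\tau)$. Hence for any $t\in[0,1]$,
\[
\PP[f(\eta(\tau))\le t]=\PP[\eta(\tau)\le f^{-1}(t)]=\int_{y}^{f^{-1}(t)}\rho(u)\,du=f(f^{-1}(t))=t,
\]
which shows $f(\eta(\tau))\sim\mathrm{Uniform}[0,1]$. There is no real obstacle here; the only thing to verify carefully is that the derivative of $f$ in the Schwartz–Christoffel formula matches the density in~\eqref{eqn::dis} (the normalizing denominators agree by construction), so the corollary is essentially a direct rereading of Theorem~\ref{thm::decom}.
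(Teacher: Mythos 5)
Your proposal is correct and follows essentially the same route as the paper: both identify $f$ as the cumulative distribution function of $\eta(\tau)$ by checking $f'=\rho$ on $(y,+\infty)$ and then apply the standard fact that the CDF pushes the law forward to the uniform distribution. Your write-up simply spells out the details (monotonicity, boundary values, the inverse-function computation) that the paper leaves implicit.
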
\label{coro::dis}
\begin{proof}
Note that the derivative of $f$ equals to $\rho$ (see~\eqref{eqn::dis}) on $(y,+\infty)$. This finishes the proof.
\end{proof}

\section{Proof of Theorem~\ref{thm::reversibility}}
\label{sec::proof}
In this section, we will prove Theorem~\ref{thm::reversibility}. 
Define $\psi$ to be
\[\psi(z):=\frac{1}{\overline z},\quad\text{for }z\in\HH.\]
Fix $\kappa\in (4,8)$ and $\nu>-2$. Fix $0<x<y$. 
Denote by $\eta_{\hSLE}$ the $\hSLE_\kappa(\nu)$ curve from $0$ to $\infty$ with force points $x$ and $y$. We will prove that the law of $\psi({\eta}_{\hSLE})$ equals the law of the $\hSLE_\kappa(\nu)$ curve from $0$ to $\infty$ with force points $\psi(y)$ and $\psi(x)$.
We construct a probability measure $\QQ=\QQ(\HH;x,y)$ on $\LC\times\LC$ as follows.
\begin{itemize}
\item
We sample a random point $u$ with density $\rho(u)$ given in~\eqref{eqn::dis}.
\item
Consider the Gaussian free field $\Gamma$ with the following boundary data: 
\[-\lambda\text{ on }(-\infty, 0), \quad \lambda\text{ on }(0, x),\quad (3+\nu)\lambda\text{ on }(x,y), \quad (\kappa-3)\lambda\text{ on }(y,u),\quad(\kappa-7)\lambda\text{ on }(u,+\infty). \]
\item
We define $\eta_1$ as follows: Before hitting $(y,+\infty)$, we define $\eta_1$ to be the flow line of the GFF $\Gamma$ starting from $0$ to $\infty$. Denote by $S$ the hitting time of $\eta_1$ at $(y,+\infty)$. After time $S$, we define $\eta_1$ to be an independent $\SLE_\kappa$ from $\eta_1(S)$ to $\infty$ in the unbounded connected component of $\HH\setminus\eta_1[0,S]$. 
\item
Define $\eta_2$ to be the flow line of $\Gamma-2\lambda$ from $x$ to $y$.
\end{itemize}
We denote by $\QQ$ the law of $(\eta_1,\eta_2)$. Under $\QQ$, random curves $(\eta_1,\eta_2)$ has the following properties.
\begin{itemize}
\item
By Theorem~\ref{thm::decom}, the marginal law of $\eta_1$ under $\QQ$ equals the law of $\hSLE(\nu)$ from $0$ to $\infty$ with force points $x$ and $y$. Moreover, under $\QQ$, almost surely, $\eta_1$ ends at $u$. 
\item
From the angle difference of the flow lines, almost surely, $\eta_2$ stays to the right of $\eta_1$. 
\begin{itemize}
\item
Given $u$, we have that $\eta_2$ hits $(y,+\infty)$ at $(y,u)$ and $\eta_2$ does not hit $(-\infty,0]\cup[u,+\infty)$ almost surely.
\item
Given $u$, the conditional law of $\eta_2$ equals $\SLE_\kappa(2;\nu,\kappa-6-\nu,-4)$ from $0$ to $\infty$ with marked points $0$ on its left and $x^+,y,u$ on its right, up to the hitting time of $(y,u)$. Denote by $T$ the hitting time of $\eta_2$ at $(y,u)$ and by $\sigma$ the last hitting time of $\eta_2$ at $(x,y)$. Denote by $\LC_T$ the connected component of $\HH\setminus\eta_2[0,T]$ whose boundary contains $y$. Then, after hitting $(y,u)$, the random curve $\eta_2$ evolves as $\SLE_\kappa(\nu)$ in $\LC_T$ from $\eta_2(T)$ to $y$ with marked point $\eta_2(\sigma)$.
\item
Given $u$ and $\eta_2$, the conditional law of $\eta_1$ equals $\SLE_\kappa(\kappa-4,-4)$ in the unbounded connected component of $\HH\setminus\eta_2$ from $0$ to $\infty$ with force points $\eta_2(T)$ and $u$.
\end{itemize}
\end{itemize}
In the following Lemma, we will derive the marginal law of $\eta_2$ under $\QQ$.
\begin{lemma}\label{lem::eta2rever}
Suppose $\eta$ is $\SLE_\kappa(\nu)$ from $x$ to $y$ with marked point $x^+$. Denote by $\LC_\eta$ the unbounded connected component of $\HH\setminus\eta$. The marginal law of $\eta_2$ under $\QQ$ equals the law of $\eta$ weighted by 
\[\one_{\{\eta\text{ does not hit }(-\infty,0]\}}\frac{B\left(\frac{\kappa-4}{\kappa},\frac{8-\kappa}{\kappa}\right)}{\mathcal{Z}}\times \phi'(0)^h,\]
where $\phi$ is any conformal map from $\LC_\eta$ onto $\HH$ such that $\phi(\infty)=\infty$ and $\phi'(\infty)=1$ and
\begin{equation}\label{eqn::defpartition}
\mathcal{Z}=\mathcal{Z}(x,y):=x^{\frac{\nu+2}{\kappa}}y^{\frac{\kappa-4-\nu}{\kappa}}(y-x)^{\frac{(\nu+2)(\kappa-4-\nu)}{2\kappa}}\int_y^{+\infty} s^{-\frac{4}{\kappa}}(s-y)^{\frac{2\nu+12-2\kappa}{\kappa}}\left(s-x\right)^{-\frac{2\nu+4}{\kappa}}ds.
\end{equation}
In particular, the time-reversal of ${\eta}_2$ equals the marginal law of $\eta_2$ under $\QQ(\HH;\psi(y),\phi(x))$.
\end{lemma}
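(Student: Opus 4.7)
The plan is to obtain the marginal law of $\eta_2$ by integrating out the auxiliary point $u$ from the joint law of $(u,\eta_2)$ under $\QQ$, and to recognize the resulting density as the tilt of $\SLE_\kappa(\nu)$ announced in the statement. For a bounded measurable functional $F$ on curves I would write
\begin{align*}
\E_\QQ[F(\eta_2)]\;=\;\int_y^{+\infty}\rho(u)\,\E_\QQ[F(\eta_2)\mid u]\,du,
\end{align*}
and then use the explicit description of the conditional law of $\eta_2\mid u$ listed right after the definition of $\QQ$: up to hitting $(y,u)$ it is an $\SLE_\kappa(\underline\rho)$ with force points at $0^-,x^+,y,u$, after which it evolves as $\SLE_\kappa(\nu)$ inside the pocket containing $y$. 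By the standard partition-function/Girsanov formalism for $\SLE_\kappa(\underline\rho)$ (see \cite{MillerSheffieldIG1}), this conditional law can be expressed as a Radon-Nikodym tilt of the reference measure $\SLE_\kappa(\nu)$ from $x$ to $y$ with force point $x^+$; the tilt is the $\eta_2$-martingale associated with inserting the two extra boundary force points at $0^-$ (weight $2$) and at $u$ (weight $-4$), and its terminal value on $\{\eta_2\text{ does not hit }(-\infty,0]\cup[u,+\infty)\}$ is an explicit product of powers of boundary points computable from the conformal map $\phi$ of the unbounded component $\LC_\eta$.

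The next step is to integrate over $u$. The denominator of $\rho(u)$ in \eqref{eqn::dis} matches the factor appearing in $\mathcal{Z}$ from \eqref{eqn::defpartition}, so multiplying $\rho(u)\,du$ by the tilt density and restricting $u$ to $(y,+\infty)\cap\LC_\eta$ (outside of which the indicator kills the integrand) allows the change of variables $u=\phi^{-1}(v)$. The powers of $u-x,\,u-y,\,u$ coming from the tilt and from $\rho$ combine with the Jacobian $(\phi^{-1})'(v)$ to yield, on one hand, the factor $\phi'(0)^h$ at the marked point $0$ (with $h=(6-\kappa)/(2\kappa)$ the boundary conformal weight of $\SLE_\kappa$), and, on the other hand, the Beta integral
\begin{align*}
\int_0^1 v^{-4/\kappa}(1-v)^{(8-\kappa)/\kappa}\,dv\;=\;B\!\left(\tfrac{\kappa-4}{\kappa},\tfrac{8-\kappa}{\kappa}\right),
\end{align*}
which is the $\nu=-2$ specialization of \eqref{eqn::integral}. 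This is the expected value since, by construction, $\eta_1$ continues as plain $\SLE_\kappa$ inside $\LC_\eta$ after hitting $(y,+\infty)$. Collecting the factors produces the Radon-Nikodym derivative claimed in the lemma.

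For the reversibility assertion, I would apply the anti-conformal involution $\psi$. The reference measure $\SLE_\kappa(\nu)$ from $x$ to $y$ with force point $x^+$ is mapped by $\psi$ to its time-reversal, which by \cite[Theorem 1.2]{MillerSheffieldIG3} is $\SLE_\kappa(\nu)$ from $\psi(y)$ to $\psi(x)$ with force point $\psi(y)^+$. The event ``does not hit $(-\infty,0]$'' is $\psi$-invariant (since $\psi$ fixes $(-\infty,0]\cup\{\infty\}$ setwise), and $\mathcal{Z}(\psi(y),\psi(x))$ differs from $\mathcal{Z}(x,y)$ only by a conformal factor that is exactly absorbed into the transformation rule of $\phi'(0)^h$ under $\psi$; the Beta constant is symmetric in the two arguments. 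Hence the two tilted measures coincide after time-reversal.

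The main obstacle is the bookkeeping in the second step: tracking the exponents coming from the four force points through the partition-function tilt and verifying that, after integrating in $u$, the only surviving conformal factor is the clean $\phi'(0)^h$. This is the point where the explicit form of $\rho(u)$ in \eqref{eqn::dis} conspires with the $\SLE_\kappa(\underline\rho)$ partition function to collapse into the Beta integral above; away from this collapse the computation is essentially It\^o calculus as in Lemma~\ref{lem::deco}.
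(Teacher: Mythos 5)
Your proposal follows essentially the same route as the paper: disintegrate over $u$, rewrite the conditional law of $\eta_2$ given $u$ as a coordinate-change (partition-function) tilt of the reference $\SLE_\kappa(\nu)$ from $x$ to $y$ with force point $x^+$, integrate out $u$ so that the tilt collapses to $B\left(\frac{\kappa-4}{\kappa},\frac{8-\kappa}{\kappa}\right)\phi'(0)^h/\mathcal{Z}$, and then deduce the time-reversal statement from the reversibility of $\SLE_\kappa(\nu)$ together with the $\psi$-invariance of $\phi'(0)^h$ (the paper gets $\mathcal{Z}(\psi(y),\psi(x))=\mathcal{Z}(x,y)$ for free by taking $G\equiv 1$, since both sides are probability measures). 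Two minor points: your Beta integrand should read $(1-v)^{(8-2\kappa)/\kappa}$ rather than $(1-v)^{(8-\kappa)/\kappa}$, and the paper additionally verifies the uniform integrability of the tilting martingale (Lemma~\ref{lem::auxeta2}) so that the identification of its terminal value at the hitting time of $(y,u)$, and the vanishing of that value when $\eta$ hits $(-\infty,0]\cup[u,+\infty)$ first, are justified.
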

To prove Lemma~\ref{lem::eta2rever}, we begin with an auxiliary lemma.
\begin{lemma}\label{lem::auxeta2}
Fix $x<y<u$. Suppose $\eta$ is $\SLE_\kappa(\nu)$ from $x$ to $y$ with marked point $x^+$. We denote by $W$ the driving function of $\eta$ and by $(g_t:t\ge 0)$ the corresponding conformal maps and by $(V_t(z):t\ge 0)$ the corresponding Loewner flow for every $z\in\R$. Denote by $T$ the hitting time of $\eta$ at $(y,u)$ and by $\tau$ the hitting time of $\eta$ at $(-\infty,0]\cup[u,+\infty)$. For $t<T\wedge\tau$, define
\begin{align*}
M_t=M_t(u):=&g'_t(0)^hg'_t(u)\times (W_t-V_t(0))^{\frac{2}{\kappa}}(V_t(x^+)-V_t(0))^{\frac{\nu}{\kappa}}(V_t(y)-g_t(0))^{\frac{\kappa-6-\nu}{\kappa}}(V_t(u)-V_t(0))^{-\frac{4}{\kappa}}\\
&\times(V_t(u)-W_t)^{-\frac{4}{\kappa}}(V_t(u)-V_t(x^+))^{-\frac{2\nu}{\kappa}}(V_t(u)-V_t(y))^{-\frac{2(\kappa-6-\nu)}{\kappa}}.
\end{align*}
Then, we have:
\begin{itemize}
\item
$\{M_{t\wedge T\wedge\tau}\}_{t\ge 0}$ is a uniformly integrable martingale for $\eta$. Moreover,
\begin{itemize}
\item
On the event $\{T<\tau\}$, we have
\begin{equation}\label{eqn::auxterminal1}
M_T=\lim_{t\to T}M_t=g'_T(0)^hg'_T(u)\left(W_T-g_T(0)\right)^{\frac{\kappa-4}{\kappa}}\left(g_T(u)-g_T(0)\right)^{-\frac{4}{\kappa}}\left(g_T(u)-W_T\right)^{-\frac{2(\kappa-4)}{\kappa}}.
\end{equation}
\item
On the event $\{\tau<T\}$, we have
\begin{equation}\label{eqn::auxterminal2}
M_\tau=\lim_{t\to \tau}M_t=0.
\end{equation}
\end{itemize}
\item
Suppose $\eta^u$ is $\SLE_\kappa(2;\rho,\kappa-6-\rho,-4)$ from $0$ to $\infty$ with marked points $0$ on its left and $x^+,y,u$ on its right. Then, the law of $\eta$ weighted by $\left\{\frac{M_{t\wedge T\wedge\tau}}{M_0}\right\}_{t\ge 0}$ equals the law of $\eta^u$, up to the hitting time at $(y,u)$.
\end{itemize}
\end{lemma}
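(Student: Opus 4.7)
\textbf{Proof plan for Lemma~\ref{lem::auxeta2}.} The plan is to mirror the strategy used in the proof of Lemma~\ref{lem::deco}: verify the local martingale property via It\^o's formula, read off the terminal values from standard Loewner behaviour at the stopping times, promote the stopped process to a uniformly integrable martingale by a localisation argument, and identify the tilted law through Girsanov's theorem.

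For the first bullet, I would use the fact that $\SLE_\kappa(\nu)$ from $x$ to $y$ with marked point $x^+$ can be realised as $\SLE_\kappa(\nu,\kappa-6-\nu)$ from $x$ to $\infty$ with force points $x^+$ and $y$, so that
\[
dW_t=\sqrt\kappa\, dB_t+\frac{\nu\, dt}{W_t-V_t(x^+)}+\frac{(\kappa-6-\nu)\, dt}{W_t-V_t(y)}.
\]
Combined with the Loewner ODEs $dV_t(z)=\tfrac{2\, dt}{V_t(z)-W_t}$ and $d\log g'_t(z)=-\tfrac{2\, dt}{(V_t(z)-W_t)^2}$, It\^o's formula applied to $\log M_t$ produces drift contributions of three kinds: those proportional to $(V_t(z)-W_t)^{-2}$ (from $d\log g'_t$ and from $d\langle W\rangle_t=\kappa\,dt$ acting on each power factor), cross terms of the form $((V_t(z)-W_t)(V_t(z')-W_t))^{-1}$, and mixed terms of the form $(V_t(z)-W_t)^{-1}(V_t(z)-V_t(z'))^{-1}$. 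The seven exponents $2/\kappa,\nu/\kappa,(\kappa-6-\nu)/\kappa,-4/\kappa,-4/\kappa,-2\nu/\kappa,-2(\kappa-6-\nu)/\kappa$ in the definition of $M_t$ are engineered precisely so that all drift terms cancel pairwise; this is purely polynomial bookkeeping, no analogue of the hypergeometric ODE~\eqref{eqn::ODE} from Lemma~\ref{lem::deco} is needed.

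Next I would verify the terminal values. On $\{T<\tau\}$, as $t\to T$ the force points $x^+$ and $y$ get swallowed simultaneously, so $V_t(x^+), V_t(y)\to W_T$ while $V_t(0)\to g_T(0)$ and $V_t(u)\to g_T(u)$ stay finite. The three factors involving $V_t(0)$ pool into $(W_T-g_T(0))^{(2+\nu+\kappa-6-\nu)/\kappa}=(W_T-g_T(0))^{(\kappa-4)/\kappa}$, while $(V_t(u)-W_t)^{-4/\kappa}$, $(V_t(u)-V_t(x^+))^{-2\nu/\kappa}$, $(V_t(u)-V_t(y))^{-2(\kappa-6-\nu)/\kappa}$ combine into $(g_T(u)-W_T)^{-2(\kappa-4)/\kappa}$, which together with $(V_t(u)-V_t(0))^{-4/\kappa}$ and the conformal derivatives reproduces~\eqref{eqn::auxterminal1}. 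On $\{\tau<T\}$ I would split according to which side gets hit: if $\eta$ hits $(-\infty,0]$ then $W_t-V_t(0)\to 0$ and the factor $(W_t-V_t(0))^{2/\kappa}$ kills $M_t$; if $\eta$ hits $[u,+\infty)$ then I pool $g'_t(u)$ with the four $V_t(u)$-containing factors and use standard Loewner asymptotics at the swallowing time of $u$ (noting that the sum of the exponents is $1+(-4-4-2\nu-2\kappa+12+2\nu)/\kappa=(4-\kappa)/\kappa<0$) to show that the product still tends to $0$, giving~\eqref{eqn::auxterminal2}.

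For the second bullet, I would localise along a stopping time $\sigma_\eps$ equal to the first time the Loewner configuration leaves a compact set or enters the $\eps$-neighbourhood of $(-\infty,0]\cup\{y\}\cup [u,+\infty)$. On $[0,\sigma_\eps]$ the process $M_t$ is bounded, hence a true martingale, and Girsanov's theorem applies: the only $W_t$-dependent factors in $M_t$ are $(W_t-V_t(0))^{2/\kappa}$ and $(V_t(u)-W_t)^{-4/\kappa}$, so the drift added to $W_t$ by the reweighting equals
\[
\kappa\,\partial_W\log M_t\, dt=\frac{2\, dt}{W_t-V_t(0)}-\frac{4\, dt}{W_t-V_t(u)},
\]
which together with the original $\SLE_\kappa(\nu,\kappa-6-\nu)$ drift produces exactly the $\SLE_\kappa(2;\nu,\kappa-6-\nu,-4)$ drift with left force point at $0^-$ and right force points at $x^+,y,u$, as claimed. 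Sending $\eps\to 0$ and invoking the terminal-value analysis (in particular $M_\tau=0$ on $\{\tau<T\}$, so the tilted measure places no mass there) upgrades the stopped process to a uniformly integrable martingale and yields the identification with $\eta^u$ up to the hitting time of $(y,u)$.

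The main technical obstacle is the bookkeeping in the first step: with seven power factors on top of the two conformal derivatives, the It\^o drift has many pairwise cross terms whose cancellation pins down the exponents uniquely (they coincide with the exponents prescribed by the Coulomb-gas partition function of the target $\SLE_\kappa(2;\nu,\kappa-6-\nu,-4)$). A secondary difficulty is the terminal-value analysis on the event that $\eta$ hits $[u,+\infty)$, where one has to track the joint blow-up of $g'_t(u)$ and the four $V_t(u)$-dependent factors and verify that the net exponent makes the product tend to zero.
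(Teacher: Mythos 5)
Your overall skeleton matches the paper's: coordinate-change $\eta$ to $\SLE_\kappa(\nu,\kappa-6-\nu)$ from $x$ to $\infty$, exhibit $M_t$ as a local martingale whose Girsanov tilt adds the drifts $\frac{2}{W_t-V_t(0)}-\frac{4}{W_t-V_t(u)}$, localize to get a true martingale, and pass to the limit. (The paper simply cites \cite[Theorems 3 and 6]{SchrammWilsonSLECoordinatechanges} for the local-martingale and tilting steps rather than redoing the It\^o bookkeeping, but your by-hand computation would reach the same place and is consistent with how Lemma~\ref{lem::deco} is proved.) The limit formula~\eqref{eqn::auxterminal1} on $\{T<\tau\}$ is also handled the same way, by continuity and pooling of exponents, and your arithmetic there checks out.

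The genuine gap is in your treatment of~\eqref{eqn::auxterminal2} and, relatedly, of uniform integrability. Your direct asymptotic argument on $\{\tau<T\}$ does not work as stated: when $\eta$ hits $(-\infty,0]$ you cannot conclude from $(W_t-V_t(0))^{2/\kappa}\to 0$ alone, because $g'_t(0)^h$ blows up when $h=\frac{6-\kappa}{2\kappa}<0$ (i.e.\ for $\kappa\in(6,8)$), so you must compare the rates of $g'_t(0)\to 0$ and $W_t-V_t(0)\to 0$; and on the event that $\eta$ hits $[u,+\infty)$ your ``net exponent $(4-\kappa)/\kappa<0$'' reasoning points the wrong way --- a negative total exponent on quantities tending to zero suggests blow-up, not decay, and in any case the factors $g'_t(u)$, $V_t(u)-W_t$, $V_t(u)-V_t(x^+)$, $V_t(u)-V_t(y)$ vanish at different rates, so summing exponents is not justified. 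The paper avoids all of this: it first establishes uniform integrability by noting that the tilted laws $P_n$ (stopped at $T\wedge\tau_n$) are consistent and that the target process $\eta^u$ almost surely never hits $(-\infty,0]\cup[u,+\infty)$, so no mass escapes as $n\to\infty$; then~\eqref{eqn::auxterminal2} follows for free from $\E\bigl[\tfrac{M_\tau}{M_0}\one_{\{\tau<T\}}\bigr]=\PP[\eta^u\text{ hits }(-\infty,0]\cup[u,+\infty)]=0$ together with $M_\tau\ge 0$. Your plan inverts this logic (using the terminal values to deduce uniform integrability), which both rests on the flawed asymptotics and would still leave you needing a mass-conservation argument. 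You should replace the case analysis by the probabilistic argument, for which the only input is that $\SLE_\kappa(2;\nu,\kappa-6-\nu,-4)$ with these force points does not hit $(-\infty,0]\cup[u,+\infty)$ (by \cite[Theorem 1.3, Remark 5.3]{MillerSheffieldIG1}).
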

\begin{proof}
By definition of $\eta$ and~\cite[Theorem 3]{SchrammWilsonSLECoordinatechanges}, before time $T$, the law of $\eta$ equals the law of $\SLE_\kappa(\nu,\kappa-6-\nu)$ from $x$ to $\infty$ with marked points $x^+$ and $y$. Thus,~\cite[Theorem 6]{SchrammWilsonSLECoordinatechanges} implies that $\{M_{t\wedge T\wedge\tau}\}_{_t\ge 0}$ is a local martingale for $\eta$ and the law of $\eta$ weighted by $\left\{\frac{M_{t\wedge T\wedge\tau}}{M_0}\right\}_{t\ge 0}$ equals the law of $\eta^u$ before hitting $(-\infty,0)\cup(y,+\infty)$.

Firstly, we prove that $\{M_{t\wedge T\wedge\tau}\}_{t\ge 0}$ is uniformly integrable. Define $\tau_n$ to be the hitting time of curves at the union of $\frac{1}{n}$-neighborhood of $(-\infty,0)\cup(u,+\infty)$ and $ \partial B(0,n)$. Then, we have that
$\{M_{t\wedge T\wedge\tau_n}\}$ is uniformly bounded. Then, $\{M_{t\wedge \tau_n}\}_{t\ge 0}$ is a uniformly integrable martingale. Denote by $P_n$ the law of $\eta$ weighted by $\left\{\frac{M_{t\wedge T\wedge\tau_n}}{M_0}\right\}_{t\ge 0}$. Then $P_n$ is the same as the law of $\eta^u$ before $T\wedge\tau_n$. Since $\{P_n\}_{n\ge 1}$ is compatible, there exists a probability measure $P$ on curves, such that under $P$, the law of the random curve equals $\eta^u$ before hitting $(-\infty,0]\cup[u,+\infty)$. Since $\eta^u$ does not hit $(-\infty,0]\cup[u,+\infty)$ almost surely, we have that $P$ is the same as the law of $\eta^u$ up to $T$. This implies that $\left\{\frac{M_{t\wedge T\wedge\tau}}{M_0}\right\}_{t\ge 0}$ is uniformly integrable and the law of $\eta$ weighted by $\left\{\frac{M_{t\wedge T\wedge\tau}}{M_0}\right\}_{t\ge 0}$ equals the law of $\eta^u$, up to the hitting time of $(y,u)$.

Secondly, we derive~\eqref{eqn::auxterminal1} and~\eqref{eqn::auxterminal2}. 
\begin{itemize}
\item
On the event $\{T<\tau\}$, we obtain~\eqref{eqn::auxterminal1} from the continuity of $V_t$ and $g_t$.
\item
On the event $\{\tau<T\}$, we have
\[\E\left[\frac{M_\tau}{M_0}\one_{\{\tau<T\}}\right]=\PP[\eta^u\text{ hits }(-\infty,0]\cup[u,+\infty)]=0.\]
This implies~\eqref{eqn::auxterminal2}.
\end{itemize}
Thus, we finish the proof.
\end{proof}
\begin{proof}[Proof of Lemma~\ref{lem::eta2rever}]
We use the same notations as in Lemma~\ref{lem::auxeta2}. Recall that given $u$, the conditional law of $\eta_2$ equals $\eta^u$ before hitting $(y,u)$. After hitting $(y,u)$, it evolves as $\SLE_\kappa(\nu)$. Suppose $G$ is a bounded and measurable function on the curve space. 
By Lemma~\ref{lem::auxeta2}, we have
\begin{align*}
\QQ[G(\eta_2)]&=\E\left[\int^{+\infty}_y du\frac{M_{T}(u)}{M_0(u)}\one_{\{\eta\text{ does not hit }(-\infty,0]\cup[u,+\infty)\}}G(\eta)\rho(u)\right]\\
&=\frac{1}{\mathcal{Z}}\E\left[G(\eta)\one_{\{\eta\text{ does not hit }(-\infty,0]\}}g'_T(0)^h\times\left(W_T-g_T(0)\right)^{\frac{\kappa-4}{\kappa}}\int^{+\infty}_{W_T}ds\left(s-g_T(0)\right)^{-\frac{4}{\kappa}}\left(s-W_T\right)^{-\frac{2(\kappa-4)}{\kappa}}\right]\\
&=\frac{B\left(\frac{\kappa-4}{\kappa},\frac{8-\kappa}{\kappa}\right)}{\mathcal{Z}}\E\left[G(\eta)\one_{\{\eta\text{ does not hit }(-\infty,0]\}}g'_T(0)^h\right].
\end{align*}
We still need to prove the reversibility of $\eta_2$. Note that by~\cite[Theorem 1.2]{MillerSheffieldIG3}, the law of $\psi(\eta)$ equals $\SLE_\kappa(\nu)$ from $\psi(y)$ to $\psi(x)$.
We define $g_\psi:=g'_T(0)\times\psi\circ g_T\circ \psi^{-1}$. Then, $g_\psi$ is the conformal map from the unbounded connected component of $\HH\setminus\psi(\eta_2)$ onto $\HH$ such that $g_\psi(\infty)=\infty$ and $g'_\psi(\infty)=1$. Note that $g'_\psi(0)=g'_T(0)$. Thus, we have
\begin{equation}\label{eqn::revereta2}
\QQ(\HH;x,y)[G(\psi({\eta}_2))]=\frac{\mathcal{Z}(\psi(y),\psi(x))}{\mathcal{Z}(x,y)}\QQ(\HH;\psi(y),\psi(x))[G(\eta_2)].
\end{equation}
By taking $G\equiv 1$, we have $\mathcal{Z}(\psi(y),\psi(x))=\mathcal{Z}(x,y)$. Combining with~\eqref{eqn::revereta2}, we complete the proof.
\end{proof}
In the next lemma, we will derive the conditional law of $\eta_1$ given $\eta_2$.
\begin{lemma}\label{lem::condieta1}
Under $\QQ$, the conditional law of $\eta_1$ given $\eta_2$ equals $\SLE_\kappa$ from $0$ to $\infty$ in the unbounded connected component of $\HH\setminus\eta_2$.
\end{lemma}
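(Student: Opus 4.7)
The plan is to compute the conditional density of $u$ given $\eta_2$ under $\QQ$ by Bayes and then identify it with the first-hitting density of $\SLE_\kappa$ in the unbounded connected component of $\HH\setminus\eta_2$ on the boundary arc $(\eta_2(T),+\infty)$. The construction of $\QQ$ already supplies that, given $(u,\eta_2)$, the pre-$S$ part $\eta_1[0,S]$ is $\SLE_\kappa(\kappa-4,-4)$ in that unbounded component with force points $\eta_2(T)$ and $u$, and after $S$ the curve continues as an independent $\SLE_\kappa$. Since the strong Markov property of $\SLE_\kappa$ glues an $\SLE_\kappa$ run up to a stopping time with an independent $\SLE_\kappa$ in the remaining domain to recover a single $\SLE_\kappa$, it suffices to show that the conditional law of $\eta_1[0,S]$ given $\eta_2$ coincides with $\SLE_\kappa$ in the unbounded connected component stopped at the first hit of $(\eta_2(T),+\infty)$.

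By Bayes, $q(u\,|\,\eta_2)\propto\rho(u)\cdot M_T(u)/M_0(u)$, where $\rho$ is the density from Theorem~\ref{thm::decom} and $M_T(u)/M_0(u)$ is the Radon-Nikodym derivative from Lemma~\ref{lem::auxeta2} that converts the reference $\SLE_\kappa(\nu)$ from $x$ to $y$ into the conditional law of $\eta_2$ given $u$ up to time $T$ (the post-$T$ evolution of $\eta_2$ is $u$-independent and cancels from the Bayes ratio). A direct comparison of the exponents appearing in~\eqref{eqn::dis} and in $M_0(u)$ shows that $\rho(u)/M_0(u)$ is constant in $u$, so $q(u\,|\,\eta_2)\propto M_T(u)$. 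The crucial geometric fact is that at the stopping time $T$ the Loewner hull of $\eta_2$ has just closed a loop absorbing the entire real segment $[x,\eta_2(T)]$, so both $x^+$ and $y$ are swallowed by the driving function: $V_T(x^+)=V_T(y)=W_T$. Substituting these identities into the formula for $M_T(u)$ collapses the factors involving $V_T(x^+)$ and $V_T(y)$, and the $u$-dependent part of $M_T(u)$ simplifies to $g'_T(u)\,(V_T(u)-V_T(0))^{-4/\kappa}\,(V_T(u)-W_T)^{(8-2\kappa)/\kappa}$.

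Meanwhile, the conformal map $\phi:=g_T-V_T(0)$ sends the unbounded connected component of $\HH\setminus\eta_2[0,T]$ onto $\HH$ with $\phi(0)=0$, $\phi(\infty)=\infty$, $\phi(\eta_2(T))=W_T-V_T(0)$ and $\phi(u)=V_T(u)-V_T(0)$. Applying Theorem~\ref{thm::decom} with $\nu=-2$ to $\SLE_\kappa$ in $\HH$ from $0$ to $\infty$ with marked point $\phi(\eta_2(T))$ and pulling the density back through $\phi$ produces precisely the $u$-dependent expression above, so $q(u\,|\,\eta_2)$ is the first-hitting density of $\SLE_\kappa$ on $(\eta_2(T),+\infty)$. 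Moreover, the conditional-law part of Theorem~\ref{thm::decom} with $\nu=-2$ identifies the associated pre-hit $\SLE_\kappa$ curve as $\SLE_\kappa(\kappa-4,-4)$ with force points $\eta_2(T)$ and $u$, which is exactly the conditional law of $\eta_1[0,S]$ given $(u,\eta_2)$. Marginalizing out $u$ therefore gives $\eta_1[0,S]\,|\,\eta_2\sim\SLE_\kappa$ stopped at the first hit of $(\eta_2(T),+\infty)$, and the strong Markov property extends this to all of $\eta_1$. The main obstacle is the rigorous justification of the swallowing identity $V_T(x^+)=V_T(y)=W_T$, which follows from the topological observation that $\eta_2$'s Loewner hull at time $T$ engulfs $[x,\eta_2(T)]$ as soon as the curve closes its loop by touching the real axis at $\eta_2(T)\in(y,u)$.
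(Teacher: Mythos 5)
Your argument is correct, and while it rests on the same two pillars as the paper's proof (the Radon--Nikodym derivative $M_{T}(u)/M_0(u)$ from Lemma~\ref{lem::auxeta2}, and the fact that mixing $\SLE_\kappa(\kappa-4,-4)$ over the hitting point with the right density reproduces plain $\SLE_\kappa$), it executes the final step differently. The paper never writes down the conditional density of $u$ given $\eta_2$ explicitly; instead it keeps everything inside a double expectation, introduces the coordinate-change martingale $\tilde N_s(v)$ for an auxiliary $\SLE_\kappa$ started at $g_T(0)$, and shows by Fubini that $\int_{W_T}^{+\infty}\tilde N_{\tilde S_n}(v)\,dv$ is the constant $B\bigl(\tfrac{\kappa-4}{\kappa},\tfrac{8-\kappa}{\kappa}\bigr)$, so that the $v$-integral collapses to the law of an unweighted $\SLE_\kappa$. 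You instead extract the mixing density by Bayes, observe that $\rho(u)/M_0(u)$ is constant in $u$ (a correct computation: the $u$-exponents in~\eqref{eqn::dis} and in $M_0(u)$ match exactly), and then recognize the pair (hitting density, conditional law $\SLE_\kappa(\kappa-4,-4)$) as precisely the $\nu=-2$ instance of Theorem~\ref{thm::decom} transported by $g_T-g_T(0)$; note that $\nu=-2$ lies in the admissible range $\nu>\tfrac{\kappa}{2}-6$ for $\kappa\in(4,8)$ and that the weight $\nu+2=0$ at the first force point makes the auxiliary marked point irrelevant, so the invocation is legitimate. This is arguably more conceptual: it makes transparent that Lemma~\ref{lem::condieta1} is the statement that $\QQ$, conditioned on $\eta_2$, reproduces the $\nu=-2$ decomposition of Theorem~\ref{thm::decom}, and it avoids redoing the uniform-integrability analysis for $\tilde N$, which is already buried in the proof of Theorem~\ref{thm::decom}. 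The price is two points you must (and do) justify with care: first, the Bayes ratio only involves $M_T(u)/M_0(u)$ because the post-$T$ evolution of $\eta_2$ is the same under the reference measure and under the conditional law given $u$; second, the swallowing identity $V_T(x^+)=V_T(y)=W_T$, which you correctly identify as the crux of the collapse of $M_T(u)$ --- this is exactly the fact the paper uses silently to pass to~\eqref{eqn::auxterminal1}, and your topological justification (the hull at time $T$ engulfs $[x,\eta_2(T)]$ the moment the curve closes its loop on $(y,u)$) is the right one. Both proofs then conclude identically by gluing the independent $\SLE_\kappa$ continuation after time $S$ via the domain Markov property.
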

\begin{proof}
Suppose $G_1$ and $G_2$ are two bounded and continuous functions on curve space. We use the same notations as in Lemma~\ref{lem::auxeta2}. Recall the definition of $\rho(u)$ in~\eqref{eqn::dis} and the definition of $\mathcal{Z}$ in~\eqref{eqn::defpartition}. Recall that we denote by $S$ the hitting time of $\eta_1$ at $(y,+\infty)$ and by $T$ the hitting time of $\eta_2$ at $(y,+\infty)$.  Define $v=g_T(u)$, by construction of $\QQ$ and Lemma~\ref{lem::auxeta2}, we have
\begin{align}\label{eqn::auxcond}
&\QQ[G_1(g_T(\eta_1[0,S]))G_2(\eta_2)]\notag\\
=&\QQ[\QQ[\QQ[G_1(g_T(\eta_1[0,S]))\cond \eta_2,u]G_2(\eta_2)\cond u]]\notag\\
=&\E\left[\QQ[G_1(g_T(\eta_1[0,S]))\cond \eta,u]G_2(\eta)\one_{\{\eta\cap(-\infty,0]=\emptyset\}}\int^{+\infty}_{W_T}du\rho(u)\frac{M_{T\wedge\tau}(u)}{M_0(u)}\right]\notag\\
=&\frac{1}{\mathcal{Z}}\E\left[G_2(\eta)\one_{\{\eta\cap(-\infty,0]=\emptyset\}}g'_T(0)^h\underbrace{\left(W_T-g_T(0)\right)^{\frac{\kappa-4}{\kappa}}\int^{+\infty}_{W_T}ds \QQ[G_1(g_T(\eta_1[0,S]))\cond \eta,v]\left(v-g_T(0)\right)^{-\frac{4}{\kappa}}\left(v-W_T\right)^{-\frac{2(\kappa-4)}{\kappa}}}_R\right].
\end{align}
Note that by construction, given $\eta$ and $v$, the conditional law of $g_T(\eta_1)$ equals $\tilde\eta^v\sim\SLE_\kappa(\kappa-4,-4)$ from $g_T(0)$ to $\infty$ with marked points $W_T$ and $v$, up to the hitting time at $(W_T,+\infty)$. 
Suppose $\tilde\eta$ is $\SLE_\kappa$ from $g_T(0)$ to $\infty$. Denote by $\tilde W$ the driving function of $\tilde\eta$ and by $(\tilde g_s:s\ge 0)$ the corresponding conformal maps. Define $\tilde S$ to be the hitting time of $\tilde\eta$ at $(W_T,+\infty)$. For $s<\tilde S$, we define
\[\tilde N_s=\tilde N_s(v):=\left(\tilde g_s(W_T)-\tilde W_s\right)^{\frac{\kappa-4}{\kappa}}\left(v-\tilde W_s\right)^{-\frac{4}{\kappa}}\left(v-\tilde g_s(W_T)\right)^{-\frac{2(\kappa-4)}{\kappa}}\tilde g'_s(v).\]
By~\cite[Theorem 6]{SchrammWilsonSLECoordinatechanges}, we have that $\{\tilde N_{s\wedge \tilde S}\}_{s\ge 0}$ is a local martingale for $\tilde \eta$ and the law of $\tilde \eta$ weighted by $\left\{\frac{\tilde N_{s\wedge \tilde S}}{\tilde N_0}\right\}_{s\ge 0}$ equals the law of $\tilde\eta^v$ before hitting $(W_T,+\infty)$.
Define $\tilde S_n$  to be the hitting time of $\tilde\eta$ at the union of $\frac{1}{n}$-neighborhood of $(W_T,+\infty)$ and $\partial B(0,n)$. Note that $\tilde N_s$ is uniformly bounded for $s\le \tilde S_n$. Then, we have that $\{\tilde N_{s\wedge\tilde S_n}\}_{s\ge 0}$ is uniformly integrable. If we define $S_n$ to be the hitting time of $g_T(\eta_1)$ at the union of $\frac{1}{n}$-neighborhood of $(W_T,+\infty)$ and $\partial B(0,n)$, then we have
\begin{align}
R&=\lim_{n\to+\infty}\left(W_T-g_T(0)\right)^{\frac{\kappa-4}{\kappa}}\int^{+\infty}_{W_T}dv \QQ[G_1(g_T(\eta_1[0,S_n]))\cond \eta,v]\left(v-g_T(0)\right)^{-\frac{4}{\kappa}}\left(v-W_T\right)^{-\frac{2(\kappa-4)}{\kappa}}\tag{due to the continuity of $\eta_1$}\\
&=\lim_{n\to+\infty} \E\left[G_1(\tilde\eta[0,\tilde S_n])\int^{+\infty}_{W_T}dv \tilde N_{\tilde S_n}(v)\right]\tag{due to the uniform integrability of $\{\tilde N_{s\wedge\tilde S_n}\}_{s\ge 0}$}\\
&=B\left(\frac{\kappa-4}{\kappa},\frac{8-\kappa}{\kappa}\right)\lim_{n\to+\infty} \E\left[G_1(\tilde\eta[0,\tilde S_n])\right]\notag\\
&=B\left(\frac{\kappa-4}{\kappa},\frac{8-\kappa}{\kappa}\right)\E\left[G_1(\tilde\eta[0,\tilde S])\right].\tag{due to the continuity of $\tilde\eta$}
\end{align}
Plugging into~\eqref{eqn::auxcond}, combining with Lemma~\ref{lem::eta2rever}, we have
\begin{align}\label{eqn::auxfinal}
\QQ[G_1(g_T(\eta_1[0,S]))G_2(\eta_2)]&=\frac{B\left(\frac{\kappa-4}{\kappa},\frac{8-\kappa}{\kappa}\right)}{\mathcal{Z}}\E\left[G_2(\eta)\one_{\{\eta\cap(-\infty,0]=\emptyset\}}g'_T(0)^h\E[G_1(\tilde\eta[0,\tilde S])]\right]\notag\\
&=\QQ\left[G_2(\eta_2)\E[G_1(\tilde\eta[0,\tilde S])]\right].
\end{align}
By definition, under $\QQ$, given $\eta_1[0,S]$ and $\eta_2$, the random curve $\eta_1[S,+\infty)$ evolves as $\SLE_\kappa$ in the unbounded connected component of $\HH\setminus\eta_1[0,S]$ from $\eta_1(S)$ to $\infty$. Combining with~\eqref{eqn::auxfinal}, we complete the proof.
\end{proof}
\begin{proof}[Proof of Theorem~\ref{thm::reversibility}]
By Lemma~\ref{lem::condieta1} and~\cite[Theorem 1.2]{MillerSheffieldIG3}, we have that under $\QQ$, given $\eta_2$, the conditional law of $\psi({\eta}_1)$ equals $\SLE_\kappa$ in the unbounded connected component of $\HH\setminus\psi({\eta}_2)$ from $0$ to $\infty$. By Lemma~\ref{lem::eta2rever}, we have that the law of $\psi({\eta}_2)$ equals the law of $\eta_2$ under $\QQ(\HH;\psi(y),\psi(x))$. Combining with Lemma~\ref{lem::eta2rever} and Lemma~\ref{lem::condieta1}, we have that the law of $\psi({\eta}_1)$ equals $\hSLE_\kappa(\nu)$ from $0$ to $\infty$ with force points $\psi(y)$ and $\psi(x)$. This completes the proof.
\end{proof}
%\subsection{Proof of Theorem~\ref{thm::reversibility} when $\frac{\kappa'}{2}-6<\nu<-2$}
%\label{sec::proof1}
%\input{tex/proof1}
{\small

}
\end{document}